\def \zmontar{\buildrel}
\def \za{\alpha}
\def \ze{\varepsilon}
\def \zl{\lambda}
\def \zm{\mu}
\def \zp{\pi}
\def \zr{\rho}
\def \zt{\tau}
\def \zf{\varphi}
\def \zq{\psi}
\def \zw{\omega}
\def \zF{\Phi}
\def \zlma{\ell}
\def \zsu{\sum}
\def \zin{\cap}
\def \zun{\cup}
\def \zung{\bigcup}
\def \zdi{\oplus}
\def \zmm{\pm}
\def \zpu{\cdot}
\def \zpor{\times}
\def \zci{\circ}
\def \zmei{\leq}
\def \zmai{\geq}
\def \zco{\subset}
\def \zpe{\in}
\def \zeq{\equiv}
\def \znoi{\neq}
\def \zpar{\partial}
\def \zinf{\infty}
\def \zva{\emptyset}
\def \zfl{\rightarrow}
\def \zbv{\mid}
\def \z/{\over}
\newtheorem{theorem}{Theorem}
\newtheorem*{theorem*}{Theorem}
\newtheorem{lemma}{Lemma}
\newtheorem{corollary}{Corollary}
\newtheorem{proposition}{Proposition}
\newtheorem{remark}{Remark}
\newcommand{\Aut}{\operatorname{Aut}}
\newcommand{\Diff}{\operatorname{Diff}}
\title{Finite $C^{\zinf}$-actions are described by one
vector field}
\author{F.J.~Turiel}
\address[F.J.~Turiel]{
Departamento de {\'A}lgebra, Geometr{\'\i}a y Topolog{\'\i}a,
Facultad de Ciencias,
Campus de Teatinos, s/n,
29071-M{\'a}laga, Spain}
\email[F.J.~Turiel]{turiel@uma.es}
\author{A.~Viruel}
\address[A.~Viruel]{
Departamento de {\'A}lgebra, Geometr{\'\i}a y Topolog{\'\i}a,
Facultad de Ciencias,
Campus de Teatinos, s/n,
29071-M{\'a}laga, Spain} 
\email[A.~Viruel]{viruel@uma.es}
\thanks{Authors are partially supported by
MEC-FEDER grant MTM2010-18089, and JA grants FQM-213 and P07-FQM-2863}
\begin{document}

\begin{abstract}
In this work one shows that given a connected $C^{\zinf}$-manifold $M$
of dimension $\zmai 2$ and a finite subgroup $G\zco \Diff(M)$,
there exists a complete vector field $X$ on $M$ such that its
automorphism group equals $G\zpor \mathbb{R}$ where the factor
$\mathbb{R}$ comes from the flow of $X$.
\end{abstract}

\maketitle

\section{Introduction}

This work fits within the framework of the so called \emph{Inverse Galois Problem}: working in a category $\mathcal{C}$ and
given a group $G$, decide whether or not there exists an object $X$ in $\mathcal{C}$ such that $Aut_{\mathcal{C}}(X)\cong G$.

This metaproblem has been addressed by researchers in a wide range of situations from Algebra \cite{BW} and
Combinatorics \cite{F}, to Topology \cite{CV2}. In the setting of Differential Geometry, Kojima shows that any
finite group occurs as $\pi_0(\Diff(M))$ for some closed $3$-manifold
$M$ \cite[Corollary page 297]{Kojima}, and more recently Belolipetsky and Lubotzky \cite{BL} have proven that for
every $m \geq 2$, every
finite group is realized as the full isometry group of some compact hyperbolic $m$-manifold, so extending previous
results of Kojima \cite{Kojima} and Greenberg \cite{Greenberg}.

Here we consider automorphisms of vector fields. Although it is obvious that the automorphism group of a vector field
is never finite, we show that a given finite group of diffeomorphisms can be determined by a vector field. More precisely:

\begin{theorem*}
Consider a connected $C^{\zinf}$ manifold $M$ of dimension $m\zmai 2$
 and a finite subgroup $G$ of diffeomorphisms of $M$. Then there exists a complete $G$-invariant vector field $X$ on $M$, such that the map
$$\begin{array}{ccc}
G\zpor{\mathbb R}&\rightarrow& \Aut(X)\\
(g,t)&\mapsto&g\zci\zF_{t}
\end{array}$$ is a
group isomorphism, where $\zF$  and $\Aut(X)$ denote the flow and the group of automorphisms of $X$ respectively.
\end{theorem*}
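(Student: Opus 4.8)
The plan is to construct $X$ explicitly and then reduce the theorem to two independent assertions about the displayed map: injectivity and surjectivity. Note first that once $X$ is chosen $G$-invariant and complete, the map is automatically a group homomorphism: each $g \in G$ satisfies $g_*X = X$, so $g \in \Aut(X)$, and every automorphism commutes with the flow $\Phi$, so $\{\Phi_t\}$ is central in $\Aut(X)$; hence $(g_1\zci\Phi_{t_1})\zci(g_2\zci\Phi_{t_2}) = (g_1 g_2)\zci\Phi_{t_1+t_2}$. It therefore suffices to design $X$ so that (i) no nontrivial product $g\zci\Phi_t$ equals the identity, and (ii) every $\phi \in \Aut(X)$ has the form $g\zci\Phi_t$.

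For the construction I would first average an auxiliary complete Riemannian metric over $G$ to obtain a $G$-invariant one, and then build into $X$ a distinguished, canonically defined, $G$-invariant \emph{rigid core}: for instance by arranging that the zero set of $X$ (together with the conjugacy class of its linearization, or suitable higher-order jet data) is a single free $G$-orbit $O=\{g\zpu p_0 : g\in G\}$, with a local normal form at each zero chosen asymmetric enough to admit no extra internal symmetry. Away from the core I would make the flow complete and genuinely aperiodic, e.g.\ possessing a non-periodic orbit; this forces $\Phi_t \neq \mathrm{id}$ for $t\neq 0$ and, once one also checks that no $g\neq e$ coincides with any $\Phi_s$, yields the injectivity (i).

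The substance of the argument is surjectivity (ii). Given $\phi \in \Aut(X)$, I would proceed in three steps. Since $O$ is defined intrinsically from $X$, $\phi$ permutes $O$; because $G$ already acts on $O$ simply transitively (regular representation), I can compose $\phi$ with a unique $g^{-1}\in G$ so as to assume $\phi(p_0)=p_0$. Next, local rigidity at $p_0$---the asymmetry of the normal form together with the fact that $\phi$ commutes with $\Phi$, so that its linearization centralizes that of the flow---should force $\phi$ to agree near $p_0$ with some time-$t$ map $\Phi_t$; composing with $\Phi_{-t}$ reduces to the case where $\phi$ is the identity on a neighbourhood of $p_0$. Finally a propagation argument, exploiting that $\phi$ commutes with the complete flow and that the orbits through a neighbourhood of the core saturate $M$ (using connectedness of $M$), should upgrade ``identity near $p_0$'' to ``identity on all of $M$'', giving $\phi = g\zci\Phi_t$.

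I expect the hard part to be exactly this surjectivity, and within it the twin requirements that the induced permutation action of $\Aut(X)$ on the core be \emph{no larger} than the regular representation of $G$, and that local behaviour near the core together with commutation with $\Phi$ \emph{determine} $\phi$ globally. The delicate point the detailed construction of $X$ must address is thus to make the vector field simultaneously $G$-invariant, complete, and rigid enough to exclude exotic automorphisms---those permuting orbits unexpectedly or acting on the core beyond $G$---while retaining the desired minimal symmetry coming from $G$ and the flow.
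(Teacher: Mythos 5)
Your high-level skeleton (an intrinsic $G$-orbit of marked zeros, local rigidity at a marked point, then globalization) parallels the paper's, but two of your three load-bearing steps fail as stated. First, the construction itself is topologically obstructed: you cannot in general arrange the zero set of a $G$-invariant vector field to be a single free $G$-orbit. Take $M=S^{2}$ and $G=\mathbb{Z}/5$ acting by rotations: the five zeros would be permuted transitively by automorphisms of $X$, hence all have the same Poincar\'e--Hopf index $k$, and $5k=\chi(S^{2})=2$ has no integer solution. The paper avoids this by letting $X$ vanish on a large, intrinsically structured set: all critical points of a $G$-invariant proper Morse function (Wasserman), together with an auxiliary countable discrete set $W$ of ``rivets'' inserted along chosen trajectories (by multiplying the gradient field by a function vanishing exactly on $W$), whose whole purpose is to create combinatorial labels (chains of order $n$) that any automorphism must preserve.

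Second, your local rigidity step cannot work by local data alone, and this is precisely where the real content of the theorem lies. Commutation with the flow only forces $d\phi_{p_{0}}$ to lie in the centralizer of the linear part $A$ of $X$ at $p_{0}$; that centralizer has dimension at least $m$, so for $m\geq 3$ it is strictly larger than the two-parameter family $\{c\exp(tA)\,:\,c>0,\ t\in\mathbb{R}\}$, and at a non-resonant zero Sternberg linearization makes $X$ locally linear, so \emph{every} centralizer element is realized by a local automorphism --- no choice of ``asymmetric normal form'' or jet data can exclude them. The paper gets rigidity from global, not local, structure: at each source $i$ it plants a control family of half-lines in general position in $T_{i}M$, realized as linear $\alpha$-limits of chains whose order is an automorphism invariant; its Lemma \ref{lem-1} then pins $f_{*}(i)$ down to $\mathbb{R}^{+}\cdot G_{i}$, and positive homogeneity (Lemma \ref{lem-4}) together with the global linearization of the source outset (Proposition \ref{prop-1}) upgrades this to $f=g\circ\Phi_{t}$ on the whole outset. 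Your final propagation step also rests on a false premise: flow orbits through a neighbourhood of the zero set do not saturate $M$ in general (they miss stable manifolds of other zeros, for instance). The paper's globalization instead requires genuine work: the outsets are only dense, and one needs Lemma \ref{lem-5} (a connectedness argument with closed pieces), passage to a power with $f^{k}=\Phi_{u}$, and the key trick that after normalization $f$ has finite order, hence is an isometry of an averaged metric and is therefore determined by its $1$-jet at a single point.
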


Recall that, for any $m\zmai 2$, every finite group $G$ is a
quotient of the fundamental group of some compact, connected $C^{\zinf}$-manifold $M'$ of dimension $m$. Therefore $G$ can be regarded as the group of desk transformations of a connected covering $\zp:M\zfl M'$ and $G\zmei
\Diff(M)$. Consequently the result above solves the Galois
Inverse Problem for vector fields. Thus:

\begin{corollary}
Let $G$ be a finite group and $m\geq 2$, then there exists a connected
$C^{\zinf}$-manifold $M$ of dimension $m$ and a vector field $X$ on $M$ such that $\pi_0(\Aut(X))\cong G$.
\end{corollary}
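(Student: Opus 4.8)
The plan is to feed the Theorem a manifold on which $G$ already acts and then read off $\pi_0$ of the resulting automorphism group; everything reduces to realizing $G$ as a group of deck transformations, after which the Theorem does the rest.

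First I would justify the assertion, recalled above, that $G$ is a quotient of $\pi_1(M')$ for some compact connected $C^{\zinf}$-manifold $M'$ of dimension $m$. Choose generators $x_1,\dots,x_k$ of $G$ and let $\Sigma_k$ be the closed orientable surface of genus $k$, with $\pi_1(\Sigma_k)=\langle a_1,b_1,\dots,a_k,b_k \mid \prod_{i=1}^{k}[a_i,b_i]\rangle$. The assignment $a_i\mapsto x_i$, $b_i\mapsto e$ sends every commutator $[a_i,b_i]$ to $e$, hence respects the single defining relation and extends to a homomorphism onto $\langle x_1,\dots,x_k\rangle=G$. Setting $M'=\Sigma_k\zpor T^{m-2}$ (a point when $m=2$) gives a compact connected manifold of dimension $m$ whose fundamental group $\pi_1(\Sigma_k)\zpor\mathbb{Z}^{m-2}$ still surjects onto $G$ through the first factor.

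Let $\zf\colon\pi_1(M')\zfl G$ be this surjection. Since $\ker\zf$ is normal of finite index $|G|$, covering space theory produces a connected $|G|$-sheeted covering $\zp\colon M\zfl M'$ whose deck transformation group is $\pi_1(M')/\ker\zf\cong G$. Thus $M$ is a compact connected $C^{\zinf}$-manifold of dimension $m$, and the deck group, acting freely by diffeomorphisms, realizes $G\zmei\Diff(M)$. Applying the Theorem to the pair $(M,G)$ yields a complete $G$-invariant vector field $X$ together with a group isomorphism $G\zpor\mathbb{R}\zfl\Aut(X)$, $(g,t)\mapsto g\zci\zF_t$.

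It remains to compute $\pi_0(\Aut(X))$. Since $\zF_0$ is the identity and $t\mapsto\zF_t$ is continuous, the flow $\{\zF_t:t\zpe\mathbb{R}\}$ is connected and lies in the identity component of $\Aut(X)$; the same holds for each coset $g\zci\{\zF_t\}$, so $\Aut(X)$ is a union of $|G|$ connected cosets and has at most $|G|$ components. The crux, and the step I expect to be the main obstacle, is to rule out accumulation between distinct cosets, i.e.\ to show that the flow $\{\zF_t\}$ is a \emph{closed} one-parameter subgroup of $\Diff(M)$ and therefore cannot limit onto any nontrivial $g\zpe G$; note that a continuous injection $\mathbb{R}\zfl\Diff(M)$ need not be proper in general, so this does require the specific $X$ furnished by the Theorem. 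Granting it, each of the finitely many cosets is closed and hence also open in $\Aut(X)$, the projection $\Aut(X)\zfl G$, $g\zci\zF_t\mapsto g$, is continuous with $G$ discrete, its fibres are the connected cosets, and therefore $\pi_0(\Aut(X))\cong(G\zpor\mathbb{R})/(\{e\}\zpor\mathbb{R})\cong G$. This closedness should be read off from the construction of $X$ in the Theorem, where the completeness and the behaviour of the orbits prevent the flow from recurring onto the finite group $G$; once it is in hand the corollary is immediate.
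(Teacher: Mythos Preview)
Your approach is exactly the paper's: realize $G$ as the deck group of a finite covering $M\to M'$, then invoke the Theorem. Your explicit construction of $M'$ as $\Sigma_k\times T^{m-2}$ is a welcome detail that the paper merely asserts in the sentence preceding the corollary.

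Where you diverge is in the $\pi_0$ computation. The paper treats $\pi_0(\Aut(X))\cong G$ as immediate from the group isomorphism $\Aut(X)\cong G\times\mathbb{R}$ and says nothing further. You, by contrast, single out the closedness of $\{\zF_t\}$ in $\Aut(X)$ as ``the main obstacle'' and then leave it unproved. This is over-cautious: in your setting $G$ acts \emph{freely} on $M$ (it is a deck group), and the vector field $X$ produced in the Theorem has at least one source $p$ (the Morse function $\zm$ is proper and bounded below, so it has a minimum). Evaluation at $p$ gives a continuous map
\[
\operatorname{ev}_p\colon \Aut(X)\longrightarrow M,\qquad f\longmapsto f(p),
\]
and since $\zF_t(p)=p$ while $g(p)\neq p$ for $g\neq e$, one has $\operatorname{ev}_p(g\zci\zF_t)=g(p)$. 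The image is the finite, hence discrete, orbit $G\!\cdot\! p$, and the fibres are precisely the cosets $g\zci\{\zF_t\}$. Thus each coset is clopen and connected, so these are exactly the components and $\pi_0(\Aut(X))\cong G$. No properness or closedness of the flow in $\Diff(M)$ is needed; the free action and the existence of a fixed point of the flow settle it in one line. With this remark your argument is complete and matches the paper's.
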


Our results fit into the $C^{\zinf}$ setting, but it seems
interesting to study the same problem for other kind of manifolds
and, among them, the topological ones. Namely: given a finite
group ${\tilde G}$ of homeomorphisms of a connected topological manifold
${\tilde M}$ prove, or disprove, the existence of a continuous
action ${\tilde\zF}\colon{\mathbb R}\zpor{\tilde M}\zfl{\tilde M}$
such that:

\begin{enumerate}[label={\rm (\arabic{*})}]
\item ${\tilde\zF}_{t}\zci g=g\zci{\tilde\zF}_{t}$ for any $g\zpe{\tilde G}$ and
$t\zpe{\mathbb R}$.

\item If $f$ is a homeomorphism of ${\tilde M}$ and ${\tilde\zF}_{s}\zci
f=f\zci{\tilde\zF}_{s}$ for every $s\zpe{\mathbb R}$, then
$f=g\zci{\tilde\zF}_{t}$ for some $g\zpe{\tilde G}$ and
$t\zpe{\mathbb R}$ that are unique.
\end{enumerate}

This work, reasonably self-contained, consists of five sections,
the first one being the present Introduction. The others are
organized as follows. In Section \ref{sec-2}  some general
definitions and classical results are given. Section \ref{sec-3} is
devoted to the main result of this work (Theorem \ref{thm-1}) and
its proof. The extension of Theorem \ref{thm-1} to manifolds with non-empty boundary is addressed in Section \ref{sec-4}. The manuscript ends with an Appendix where a technical result needed in Section \ref{sec-4} is proven.

For the general questions on Differential Geometry the reader is
referred to \cite{KN} and for those on Differential Topology to
\cite{HI}.

\section{Preliminary notions}\label{sec-2}

Henceforth all structures and objects considered are real
$C^{\zinf}$ and manifolds without boundary, unless another thing
is stated. Given a vector field $Z$ on a $m$-manifold $M$ the
group of automorphisms of $Z$, namely $\Aut(Z)$, is the subgroup
of diffeomorphisms of $M$ that preserve $Z$, that is
$$\Aut(Z)=\{f\in \Diff(M): f_*(Z(p))=Z(f(p)) \text{ for all }p\in
M\}.$$

On the other hand, recall that a {\it regular trajectory} is the
trace of a non-constant maximal integral curve. Thus any regular
trajectory is oriented by the time in the obvious way and, if it
is not periodic, its points are completely ordered. As usual, a
{\it singular trajectory} is a singular point of $Z$.

If $Z(p)=0$ and $Z'$ is another vector field defined around $p$
then $[Z',Z](p)$ only depends on $Z'(p)$; thus the formula
$Z'(p)\zfl [Z',Z](p)$ defines an endomorphism of $T_{p}M$ called
{\it the linear part of $Z$ at $p$}. For the purpose of this work,
we will say that $p\zpe M$ is a {\it source} (respectively a {\it
sink}) of $Z$ if $Z(p)=0$ and its linear part at $p$ is the
product of a positive (negative) real number by the identity on
$T_{p}M$.

A point $q\zpe M$ is called a {\it rivet} if
\begin{enumerate}[label={\rm (\alph{*})}]
\item $q$ is an isolated singularity of $Z$,

\item\label{(II)} around $q$ one has $Z=\zq{\tilde Z}$ where $\zq$ is
a function and ${\tilde Z}$ a vector field with ${\tilde
Z}(q)\znoi 0$.
\end{enumerate}

Note that by \ref{(II)}, a rivet is the $\zw$-limit of exactly one
regular trajectory, the $\za$-limit of another one and an isolated
singularity of index zero.

Consider a singularity $p$ of $Z$; let $\zl_{1},\ldots,\zl_{m}$ be
the eigenvalues of the linear part of $Z$ at $p$ and
$\zm_{1},\ldots,\zm_{k}$ the same eigenvalues but only taking
each of them into
account once  regardless of its multiplicity. Assume
that $\zm_{1},\ldots,\zm_{k}$ are rationally independent; then
$\zl_{j}-\zsu_{\zlma=1}^{m}i_{\zlma}\zl_{\zlma}\znoi 0$ for any
$j=1,\ldots,m$ and any non-negative integers $i_{1},\ldots,i_{m}$
with $\zsu_{\zlma=1}^{m}i_{\zlma}\zmai 2$, and a theorem of
linearization by Sternberg (see \cite{SST} and \cite{RRO})  shows
the existence of coordinates $(x_{1},\ldots,x_{m})$ such that $p\zeq
0$ and $Z=\zsu_{j=1}^{m}\zl_{j}x_{j}\zpar/\zpar x_{j}$. That is
the case of sources ($\zl_{1}=\ldots=\zl_{m}>0$) and sinks
($\zl_{1}=\ldots=\zl_{m}<0$).

By definition, the {\it outset (or unstable manifold) $R_p$ of a
source} $p$ will be the set of all points $q\zpe M$ such that the
$\za$-limit of its $Z$-trajectory equals $p$. One has:

\begin{proposition}\label{prop-1}
Let $p$ be a source of a complete vector field $Z$. Then $R_p$ is
open and there exists a diffeomorphism from $R_p$ to ${\mathbb
R}^{m}$ that sends $p$ to the origin and $Z$ to
$a\zsu_{j=1}^{m}x_{j}\zpar/\zpar x_{j}$ for some $a\zpe{\mathbb
R}^{+}$. In other words, there exist coordinates
$(x_{1},\ldots,x_{m})$, whose domain $R_p$ is identified to ${\mathbb
R}^{m},$ such that $p\zeq 0$ and
$Z=a\zsu_{j=1}^{m}x_{j}\zpar/\zpar x_{j}$, $a\zpe{\mathbb R}^{+}$.
\end{proposition}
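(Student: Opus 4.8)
The plan is to first put $Z$ into a linear model near the source $p$ and then spread that model over all of $R_p$ by means of the flow, which is defined for all time because $Z$ is complete. First I would invoke the Sternberg linearization recalled above: since every eigenvalue of the linear part of $Z$ at $p$ equals the same $a>0$, the nonresonance hypotheses hold automatically, so there is a chart $h\colon U\to\mathbb{R}^{m}$ on a neighbourhood $U$ of $p$ with $h(p)=0$ and $h_{*}Z=a\sum_{j=1}^{m}x_{j}\,\partial/\partial x_{j}$; shrinking $U$ I may assume $h(U)$ is an open Euclidean ball centred at $0$. In this chart the flow reads $\Phi_{s}(x)=e^{as}x$, so the backward flow contracts $U$ onto $p$. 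In particular every point of $U$ has $\alpha$-limit $p$, whence $U\subseteq R_{p}$, and $U$ is invariant under the backward flow.

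Next I would identify $R_{p}$ globally. Writing $\Phi$ for the flow of $Z$, I claim $R_{p}=\bigcup_{s\in\mathbb{R}}\Phi_{s}(U)$: if $q\in R_{p}$ then its backward orbit converges to $p$, hence enters $U$ at some time $-T$, so $q\in\Phi_{T}(U)$; conversely any point of $\Phi_{s}(U)$ has backward orbit eventually inside $U$ and therefore $\alpha$-limit $p$. Since each $\Phi_{s}$ is a diffeomorphism of $M$ (here completeness is essential), this writes $R_{p}$ as a union of open sets, so $R_{p}$ is open and flow-invariant.

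The core of the argument is the construction of a diffeomorphism $\Theta\colon R_{p}\to\mathbb{R}^{m}$. For $q\in R_{p}$ I choose $t$ so large that $\Phi_{-t}(q)\in U$ and set $\Theta(q)=e^{at}\,h(\Phi_{-t}(q))$. The first thing to check is that this is independent of the admissible $t$: passing from $t$ to a larger $t'$ multiplies $h(\Phi_{-t}(q))$ by the contraction factor $e^{-a(t'-t)}$ supplied by the linearized backward flow on $U$, which cancels the change $e^{at}\mapsto e^{at'}$ in the prefactor. Thus $\Theta$ is well defined; being locally of the form $e^{at}\,h\circ\Phi_{-t}$ with $t$ fixed, it is smooth, and $\Theta(p)=0$. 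Injectivity follows by comparing two points at one common large time and using that $h$ and $\Phi_{-t}$ are injective; $\Theta$ is a local diffeomorphism by its local expression; and it is onto $\mathbb{R}^{m}$ because $h(U)$ is a full ball, so any $y$ equals $e^{at}h(b)$ for some $b\in U$ with $t$ large, giving $y=\Theta(\Phi_{t}(b))$. Finally the same computation that yields well-definedness gives $\Theta\circ\Phi_{s}=e^{as}\,\Theta$ for every $s$, i.e. $\Theta$ conjugates $\Phi$ to the flow $x\mapsto e^{as}x$; hence $\Theta_{*}Z=a\sum_{j=1}^{m}x_{j}\,\partial/\partial x_{j}$, which is exactly the normal form claimed.

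The step I expect to be the main obstacle is keeping this flow-extended coordinate honest. One must verify that $\Theta$ really is independent of the auxiliary time and, in the equivariance computation, that the radial orbit segments $\{e^{a\tau}h(\Phi_{-t}(q))\}$ involved genuinely remain inside the linearizing ball for $t$ large enough; this is precisely where the explicit linear form of the flow on $U$, together with completeness of $Z$, does the work, and everything else is then routine.
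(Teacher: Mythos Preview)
Your argument is correct and is essentially the paper's own proof: linearize $Z$ near $p$ via Sternberg, then extend the chart to all of $R_{p}$ by pushing forward along the complete flow. The only cosmetic difference is that the paper fixes the unit sphere $S^{m-1}$ inside the linearizing chart and sends $\Phi_{t}(y)\mapsto e^{at}y$ for $y\in S^{m-1}$, whereas you work with the whole ball and an arbitrary admissible backward time; the two formulas agree on their common domain and yield the same diffeomorphism $\Theta$.
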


Indeed, let $\zF_t$ be the flow of $Z$; consider coordinates
$(y_{1},\ldots,y_{m})$ such that $p\zeq 0$ and
$Z=a\zsu_{j=1}^{m}y_{j}\zpar/\zpar y_{j}$. Up to dilation and with
the obvious identifications, one may suppose that $S^{m-1}$ is
included in the domain of these coordinates. Then
$R_{p}=\{\zF_{t}(y)\zbv t\zpe{\mathbb R}, y\zpe S^{m-1}\}\zun\{
0\}$ and it suffices to send the origin to the origin and each
$\zF_{t}(y)$ to $e^{at}y$ for constructing the required
diffeomorphism.

\begin{remark}\label{rem-1}
{\rm Observe that $R_{p}\zin R_{q}=\zva$ when $p$ and $q$ are
different sources of $Z$.}
\end{remark}

Given a regular trajectory $\zt$ of $Z$ with $\za$-limit a source
$p$, by the {\it linear $\za$-limit of} $\zt$ one means the (open
and starting at the origin) half-line in the vector space $T_{p}M$
that is the limit, when $q\zpe\zt$ tends to $p$, of the half-line
in $T_{q}M$ spanned by $Z(q)$. From the local model around $p$
follows the existence of this limit; moreover if $Z$ is multiplied
by a positive function the linear $\za$-limit does not change.

By definition, a {\it chain} of $Z$ is a finite and ordered
sequence of two or more different regular trajectories, each of
them called a {\it link}, such that:

\begin{enumerate}[label={\rm (\alph{*})}]
\item The $\za$-limit of the first link is a source.

\item The $\zw$-limit of the last link is not a rivet.

\item Between two consecutive links the $\zw$-limit of the
first one equals the $\za$-limit of the second one. Moreover this
set consists in a rivet.
\end{enumerate}

The {\it order of a chain} is the number of its links and its {\it
$\za$-limit} and {\it linear $\za$-limit} those  of its first
link.

For sake of simplicity, here countable includes the finite case as
well. One says that a subset $Q$ of $M$ {\it does not exceed
dimension $\zlma$}, or it {\it can be enclosed in dimension
$\zlma$}, if there exists a countable collection $\{
N_{\zl}\}_{\zl\zpe L}$ of submanifolds of $M$, all of them of
dimension $\zmei\zlma$, such that $Q\zco \zung_{\zl\zpe
L}N_{\zl}$. Note that the countable union of sets whose dimension
do not exceed dimension $\zlma$ does not exceed dimension $\zlma$
too. On the other hand, if $\zlma<m$ then $Q$ has measure zero so
empty interior.

Given a $m$-dimensional real vector space $V$, a family
$\mathcal{L}= \{L_{1},\ldots,L_{s}\}$, $s\zmai m$, of half-lines of
$V$ is named {\it in general position} if any subfamily of
$\mathcal{L}$ with $m$ elements spans $V$.

Now consider a finite group $H\zco GL(V)$ of order $k$. A family
${\mathcal L}$ of half-lines of $V$ is named a {\it control family
with respect to $H$} if:
\begin{enumerate}[label={\rm (\alph{*})}]
\item $h(L)\zpe{\mathcal L}$ for any $h\zpe H$ and $L\zpe{\mathcal
L}$.

\item There exists a family ${\mathcal L}'$ of ${\mathcal L}$ with
$km+1$ elements, which is in general position, such that
$H\zpu{\mathcal L}'=\{h(L)\zbv h\zpe H,L\zpe{\mathcal L}'\}$
equals ${\mathcal L}$.
\end{enumerate}

\begin{lemma}\label{lem-1}
Let ${\mathcal L}$ be a control family with respect to $H$ and
$\zf$ an element of $ GL(V)$. If $\zf$ sends each orbit of the
action of $H$ on ${\mathcal L}$ into itself, then $\zf=ah$ for
some $a\zpe{\mathbb R}^{+}$ and $h\zpe H$.
\end{lemma}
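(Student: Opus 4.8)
The plan is to reduce the statement to the elementary fact that a linear automorphism fixing $m+1$ half-lines in general position must be a positive multiple of the identity, and then to exploit the precise cardinality $km+1$ in the definition of a control family to manufacture exactly such a configuration after correcting $\varphi$ by a single element of $H$.

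First I would isolate the linear-algebra core: if $\psi\in GL(V)$ satisfies $\psi(L_{i})=L_{i}$ as half-lines for half-lines $L_{1},\ldots,L_{m+1}$ in general position, then $\psi=a\,\mathrm{id}$ for some $a\in\mathbb{R}^{+}$. Indeed, since $\psi$ preserves each half-line $L_{i}$, a generator $v_{i}$ of $L_{i}$ is an eigenvector with a \emph{positive} eigenvalue $\lambda_{i}$. By general position, $v_{1},\ldots,v_{m}$ form a basis, and writing $v_{m+1}=\sum_{i=1}^{m}c_{i}v_{i}$, every $c_{i}$ must be nonzero: if some $c_{j}=0$ then $v_{m+1}$ would lie in the span of the remaining $m-1$ vectors, so $m$ of the $v_{i}$ would fail to span $V$, contradicting general position. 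Comparing $\psi(v_{m+1})=\sum c_{i}\lambda_{i}v_{i}$ with $\lambda_{m+1}v_{m+1}=\sum c_{i}\lambda_{m+1}v_{i}$ forces $\lambda_{i}=\lambda_{m+1}$ for all $i$, so all eigenvalues coincide and $\psi=a\,\mathrm{id}$ with $a=\lambda_{1}>0$.

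Next I would use the hypothesis that $\varphi$ carries each $H$-orbit of $\mathcal{L}$ into itself. Fix the subfamily $\mathcal{L}'$ with $km+1$ elements in general position. For each $L\in\mathcal{L}'$ the half-line $\varphi(L)$ lies in the orbit $H\cdot L$, so there exists $h_{L}\in H$ with $\varphi(L)=h_{L}(L)$; choosing one such element for each $L$ defines a map $\mathcal{L}'\to H$. Since $\mathcal{L}'$ has $km+1$ elements and $|H|=k$, the pigeonhole principle produces a single $h\in H$ with $\varphi(L)=h(L)$ for at least $\lceil (km+1)/k\rceil=m+1$ half-lines $L\in\mathcal{L}'$. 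Being a subfamily of $\mathcal{L}'$, these $m+1$ half-lines are themselves in general position, and each is fixed by $\psi:=h^{-1}\varphi\in GL(V)$. The first step then gives $\psi=a\,\mathrm{id}$ with $a\in\mathbb{R}^{+}$, whence $\varphi=ah$.

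The one genuinely delicate point is the counting: it is precisely the choice $km+1$, rather than $km$, that guarantees $m+1$ coincidences instead of merely $m$, and having $m+1$ half-lines in general position is essential, since $m$ invariant eigenlines by themselves do not force a scalar. Everything else is routine, and the $H$-invariance (a) of $\mathcal{L}$ is used only to make the orbit structure on $\mathcal{L}$ meaningful.
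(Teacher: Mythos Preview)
Your proof is correct and follows essentially the same route as the paper: both use pigeonhole on the map $\mathcal{L}'\to H$ (exploiting $|\mathcal{L}'|=km+1$) to find a single $h\in H$ agreeing with $\varphi$ on $m+1$ half-lines of $\mathcal{L}'$, and then conclude that $h^{-1}\varphi$ is a positive scalar because it fixes $m+1$ half-lines in general position. You simply spell out both the pigeonhole step and the linear-algebra step in more detail than the paper does.
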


Indeed, as for every $L\zpe{\mathcal L}'$ there is $h'\zpe H$ such
that $\zf(L)=h'(L)$, there exist a subfamily ${\mathcal
L}''=\{L_{1},\ldots,L_{m+1}\}$ of ${\mathcal L}'$ and a $h\zpe H$
such that $\zf(L_{j})=h(L_{j})$, $j=1,\ldots,m+1$. Therefore
$h^{-1}\zci\zf$ sends $L_j$ into $L_j$, $j=1,\ldots,m+1$, and
because ${\mathcal L}''$ is in general position $h^{-1}\zci\zf$
has to be a multiple of the identity. Since every $L_j$ is a
half-line this multiple is positive.

\section{The main result}\label{sec-3}

This section is devoted to prove the following result on finite
groups of diffeomorphisms of a connected manifold.
\begin{theorem}\label{thm-1}
Consider a connected manifold $M$ of dimension $m\zmai 2$ and a
finite group $G\zco\Diff(M)$. Then there exists a complete vector
field $X$ on $M$, which is $G$-invariant, such that the map
$$(g,t)\zpe G\zpor{\mathbb R}\zfl g\zci\zF_{t}\zpe\Aut(X)$$ is a
group isomorphism, where $\zF$ denotes the flow of $X$.
\end{theorem}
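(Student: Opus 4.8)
The plan is to construct the field $X$ by hand and then read off $\Aut(X)$ from its dynamics. Observe first that the displayed map is well defined and a homomorphism precisely because $X$ will be $G$-invariant: invariance makes every $g\in G$ an automorphism and makes $g$ commute with the flow $\Phi$, so $(g_1,t_1)(g_2,t_2)\mapsto g_1\Phi_{t_1}g_2\Phi_{t_2}=g_1g_2\Phi_{t_1+t_2}$. Injectivity is cheap: if $g\circ\Phi_t=\mathrm{id}$ then $\Phi_t=g^{-1}$ has finite order, but near a source $\Phi_t$ acts as $e^{at}\,\mathrm{Id}$, forcing $t=0$ and then $g=\mathrm{id}$. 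Hence the whole theorem reduces to the surjectivity statement $\Aut(X)=\{g\circ\Phi_t\}$, i.e.\ to killing every unexpected automorphism.

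\emph{Construction.} Since $G\subset\Diff(M)$ is faithful, $\bigcup_{g\neq e}\mathrm{Fix}(g)$ is a proper closed subset with empty interior, so I may pick $p_0$ with trivial stabiliser; its orbit $G\cdot p_0$ then consists of $|G|$ points freely permuted by $G$. I would build a complete $G$-invariant $X$ whose sources are exactly $G\cdot p_0$ (with suitable sinks to keep $X$ complete), arranged so that the union of the outsets $R_s$ of the sources is dense, its complement being enclosable in dimension $<m$. To this skeleton I attach, $G$-equivariantly, two kinds of decorations built from rivets and chains, both dynamically intrinsic and hence respected by any automorphism: (i) at each source $s$, a family of chains whose linear $\alpha$-limits form a control family of half-lines of $T_sM$ in general position (with respect to the trivial group, as the stabiliser is trivial), the chains chosen of pairwise distinct combinatorial type so that an automorphism fixing $s$ must send each such half-line to itself; and (ii) between the sources, a system of chains realising a rigid $G$-set structure on $G\cdot p_0$, a Cayley/Frucht-type decoration whose only incidence-preserving permutations are the left translations by $G$.

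\emph{Surjectivity.} Let $f\in\Aut(X)$. Being intrinsic, the notions of source, rivet and chain are preserved by $f$, so $f$ induces a permutation of $G\cdot p_0$ compatible with decoration (ii); by construction this permutation is left translation by a unique $g\in G$. Replacing $f$ by $g^{-1}\circ f$, I may assume $f$ fixes every source. Fix one source $s$: then $df_s$ preserves the control family (i), and since each half-line is sent to itself, Lemma \ref{lem-1} (with $H$ trivial) gives $df_s=a_s\,\mathrm{Id}$ with $a_s>0$. By Proposition \ref{prop-1} the outset $R_s$ is a copy of $\mathbb{R}^m$ on which $X$ is the radial field; an automorphism of the radial field fixing the origin is positively homogeneous of degree one, hence linear, so $f|_{R_s}$ equals the linear map $a_s\,\mathrm{Id}$, that is a flow map $\Phi_{t_s}$. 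Finally the chains linking the different basins, together with continuity of $f$, force all the $t_s$ to coincide with a single $t$; thus $f=\Phi_t$ on the dense set $\bigcup_s R_s$ and therefore on all of $M$. Hence the original $f$ equals $g\circ\Phi_t$, as required.

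The main obstacle is the construction itself: producing one complete $G$-invariant field that simultaneously (a) has source-outsets dense with complement enclosable in dimension $<m$, (b) carries the local control families making the linearisation rigid, and (c) carries the global Frucht-type chain decoration pinning the source-permutations to $G$, all realised equivariantly and without introducing spurious singularities or symmetries on an arbitrary connected $m$-manifold with $m\geq2$. The two most delicate points are guaranteeing that the decorations are genuinely dynamically intrinsic, so that \emph{every} automorphism is compelled to respect them, and that the bad set where trajectories do not issue from a source can indeed be enclosed in dimension $<m$; the latter is exactly what allows continuity to propagate $f=\Phi_t$ from the good region to the whole of $M$.
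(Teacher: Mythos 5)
Your analysis of automorphisms reuses exactly the paper's rigidity toolkit (linearized basins as in Proposition \ref{prop-1}, control families plus Lemma \ref{lem-1}, positive homogeneity plus Lemma \ref{lem-4}, chains and rivets as dynamical invariants), and your injectivity argument is the paper's. But there are two genuine gaps, both located precisely where you yourself flag ``the main obstacle.'' First, the construction is not merely difficult: your specific design is stronger than what is needed and is never shown to be achievable. You require a complete $G$-invariant field whose sources are \emph{exactly one free orbit} $G\cdot p_0$ with dense union of basins. Producing this amounts to finding a $G$-invariant proper Morse-type function whose local minima form a single free orbit, i.e.\ to cancelling all other index-$0$ critical points \emph{equivariantly}; equivariant handle cancellation is obstruction-laden and no argument is offered. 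The paper deliberately avoids this: it takes Wasserman's $G$-invariant proper Morse function, accepts \emph{all} its local minima as sources (with arbitrary, possibly nontrivial isotropy groups $G_i$ --- hence control families relative to $G_i$, not to the trivial group), and recovers the orbit structure combinatorially by making the chains attached to the $n$-th orbit have order $n$. Nothing in the theorem forces the sources to be a free orbit, and insisting on it converts a constructible object into one whose existence is open.

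Second, your gluing step fails as stated. Decoration (ii), a ``Frucht-type system of chains between the sources,'' cannot be built from chains: by definition a chain has a single $\za$-limit (one source) and its links flow away from it, so no chain ever connects two sources, and you give no dynamical object that does. Likewise, ``chains linking the different basins, together with continuity, force all the $t_s$ to coincide'' is not a proof: the closures of two basins may meet only along singular points (saddles, sinks, rivets), where $\zF_{t_s}(p)=p=\zF_{t_{s'}}(p)$ yields no relation between $t_s$ and $t_{s'}$. The paper needs real work here: Proposition \ref{prop-2} gives $f=g_i\zci\zF_{t_i}$ on each basin, Lemma \ref{lem-5} handles equal $g_i$'s via a connectedness argument (the disagreement locus is countable and $M$ minus a countable set is connected, using $m\zmai 2$), and the general case is reduced to it by the finite-order trick ($f^{k}=\zF_{u}$, then normalize so $f^{k}=Id$) together with the fact that a finite-order diffeomorphism is an isometry of some metric and hence determined by its $1$-jet, so being the identity on one open basin propagates to all of $M$. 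Your proposal replaces this entire mechanism by an unconstructed decoration and an appeal to continuity, so even granting your field exists, surjectivity of $(g,t)\mapsto g\zci\zF_t$ is not established.
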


Consider a Morse function $\zm\colon M\zfl{\mathbb R}$ that is
$G$-invariant, proper and non-negative, whose existence is assured
by a result of Wasserman (see the remark of page 150 and the proof
of Corollary 4.10 of \cite{WA}). Denote by $C$ the set of its
critical points, which is closed, discrete (that is without
accumulation points in $M$) so countable. As $M$ is paracompact, there
exists a locally finite family $\{A_{p}\}_{p\zpe C}$ of disjoint
open set such that $p\zpe A_{p}$ for every $p\zpe C$.

\begin{lemma}\label{lem-2}
There exists a $G$-invariant Riemannian metric ${\tilde g}$ on $M$
such that if $J(p)\colon T_{p}M\zfl T_{p}M$, $p\zpe C$, is defined
by $H(\zm)(p)(v,w)={\tilde g}(p)(J(p)v,w)$, where $H(\zm)(p)$ is
the hessian of $\zm$ at $p$, then:
\begin{enumerate}[label={\rm (\arabic{*})}]
\item If $p$ is a maximum or a minimum then $J(p)$ is a multiple
of the identity.

\item If $p$ is a saddle, that is $H(\zm)(p)$ is not definite,
then the eigenvalues of $J(p)$ avoiding repetitions due to the
multiplicity are rationally independent.
\end{enumerate}
\end{lemma}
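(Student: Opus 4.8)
The plan is to prescribe the inner product $\tilde g(p)$ at every critical point $p$ so that the relative endomorphism $J(p)$ has the right spectral type, and then to glue these pointwise prescriptions into a single global $G$-invariant metric. Since $\mu$ is $G$-invariant and each $g\in G$ is a diffeomorphism conjugating Hessians, $G$ maps $C$ to itself preserving the Morse index, so it permutes minima among minima, maxima among maxima and saddles among saddles; the three requirements are therefore $G$-invariant in nature. I would thus fix $\tilde g(p)$ on one representative of each $G$-orbit in $C$ and transport it by $\tilde g(g(p)):=(g^{-1})^*\tilde g(p)$, which leaves the eigenvalues of $J$ unchanged up to conjugation and hence preserves both conditions; the transport is well defined provided $\tilde g(p)$ is invariant under the stabilizer $G_p$. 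Two facts are used throughout: $J(p)$ is $\tilde g(p)$-self-adjoint, so it is diagonalizable with real eigenvalues; and $\mu\circ h=\mu$ for $h\in G_p$ forces $h^*H(\mu)(p)=H(\mu)(p)$, i.e. the Hessian at $p$ is $G_p$-invariant.

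The extrema are immediate. If $p$ is a minimum then $H(\mu)(p)$ is positive definite, so setting $\tilde g(p)=H(\mu)(p)$ gives $J(p)=\mathrm{Id}$; if $p$ is a maximum one takes $\tilde g(p)=-H(\mu)(p)$ and gets $J(p)=-\mathrm{Id}$. In both cases $\tilde g(p)$ is $G_p$-invariant because $H(\mu)(p)$ is, so condition (1) holds together with the invariance needed for gluing.

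The saddle case is the heart. Let $\mathcal M$ be the set of $G_p$-invariant inner products on $T_pM$, a nonempty open convex cone in the finite-dimensional space of $G_p$-invariant symmetric bilinear forms. For $g\in\mathcal M$ the eigenvalues of $J_g=g^{-1}H(\mu)(p)$ are the roots of $\det(H(\mu)(p)-\lambda g)=0$, hence depend real-analytically on $g$ on the open set where the number of distinct values is locally constant; there the distinct eigenvalues $\mu_1(g),\dots,\mu_k(g)$ are real-analytic functions. The hard part will be to rule out that some nontrivial integer relation $\sum_j n_j\mu_j\equiv 0$ holds identically on $\mathcal M$: once no such relation is identical, each relation cuts out a proper real-analytic subset, the union over the countably many integer vectors is a measure-zero set, and any $\tilde g(p)$ outside it works. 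To see the eigenvalues are not rigidly related I would write $g$ in a $G_p$-adapted basis and apply Sylvester's law of inertia equivariantly: the congruence freedom inside $\mathcal M$ realizes the distinct eigenvalues of $J_g$ as essentially arbitrary reals, constrained only by the fixed signature of $H(\mu)(p)$ and by the coincidences forced by the $G_p$-isotypic decomposition. Those forced coincidences are exactly the ``repetitions due to the multiplicity'' that (2) discards, so the distinct eigenvalues vary independently, no integer relation is identical, and rational independence holds off a measure-zero set. (Averaging a good non-invariant metric over $G_p$ is useless, since averaging scrambles the spectrum; this is why the genericity argument must be run inside $\mathcal M$ from the start.)

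Finally I would assemble a global metric. Replacing the given family $\{A_p\}$ by $V_p=\bigcap_{g\in G}g(A_{g^{-1}(p)})$ gives a disjoint, locally finite, $G$-equivariant family of open neighbourhoods with $p\in V_p$ and $g(V_p)=V_{g(p)}$. On each $V_p$ of an orbit representative I take any metric equal to the prescribed $\tilde g(p)$ at $p$ and average it over $G_p$; since $\tilde g(p)$ is $G_p$-invariant and $p$ is fixed, the average stays equal to $\tilde g(p)$ at $p$ and is $G_p$-invariant, giving a metric $g_p$ on $V_p$ that I transport equivariantly over the orbit. Picking a background $G$-invariant metric $g_0$ (average any metric over $G$) and $G$-invariant bump functions $\rho_p$ supported in $V_p$, equal to $1$ near $p$ and transported equivariantly, I set $\rho=\sum_p\rho_p$ and $\tilde g=\sum_p\rho_p\,g_p+(1-\rho)g_0$. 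At most one $\rho_p$ is nonzero at each point, so $0\le\rho\le1$ and $\tilde g$ is a genuine (convex-combination) smooth $G$-invariant Riemannian metric; near each $p\in C$ one has $\rho=\rho_p=1$, whence $\tilde g(p)$ is the prescribed inner product and $J(p)$ is as required, establishing (1) and (2).
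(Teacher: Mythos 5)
Your proof is correct, and its overall skeleton (prescribe a $G_p$-invariant inner product at each critical point, then globalize equivariantly) matches the paper's; the two arguments diverge in how they handle the saddle case and the gluing. For saddles the paper is more elementary and direct: it takes one $G_p$-invariant scalar product $\langle\,,\,\rangle$, observes that $J(p)$ is then $G_p$-equivariant and self-adjoint, so $T_pM=\bigoplus_{j=1}^{k}E_j$ with $J(p)_{|E_j}=a_j\,\mathrm{Id}$, the $E_j$ being $G_p$-invariant and mutually orthogonal, and then simply rescales the product by a constant $b_j$ on each block, which turns the eigenvalues into $a_j/b_j$; suitable $b_j$ make these rationally independent, and invariance is preserved because the blocks are $G_p$-invariant. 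Your genericity argument inside the cone $\mathcal{M}$ of invariant products reaches the same conclusion, but note that its crucial step --- ruling out that some nontrivial integer relation $\sum_j n_j\mu_j\equiv 0$ holds identically --- is exactly this rescaling freedom: restricting to the family $g_b=\sum_j b_j\langle\,,\,\rangle_{|E_j}$, a relation $\sum_j n_j a_j/b_j\equiv 0$ for all $b_j>0$ forces every $n_j=0$ since $a_j\neq 0$. As written, you justify this step only by an appeal to ``congruence freedom'' and Sylvester's law, which is the one underjustified point in your argument; filling it in amounts to writing down the paper's one-line computation, and doing so also lets you bypass the analyticity-of-eigenvalues and stratification issues entirely (work on the explicit finite-parameter family instead of all of $\mathcal{M}$). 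For the globalization, the paper extends the pointwise products to a metric via the disjoint neighborhoods $\{A_p\}$ and then averages over $G$ at the end; this works because the pointwise products were chosen $G$-equivariantly, so averaging merely multiplies each prescribed product by $|G|$, which divides all eigenvalues of $J(p)$ by $|G|$ and harms neither condition (1) nor (2) --- a small check the paper leaves implicit. Your explicitly equivariant partition-of-unity construction avoids that check altogether and is, if anything, more robust; both gluings are valid.
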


\begin{proof} We start constructing a 'good' scalar product on
each $T_{p}M$, $p\zpe C$. If $p$ is a minimum [respectively
maximum] one takes $H(\zm)(p)$ [respectively $-H(\zm)(p)$]. When
$p$ is a saddle consider a scalar product $\langle\, ,\,\rangle$ on
$T_{p}M$ invariant by the linear action of the isotropy group
$G_p$ of $G$ at $p$. In this case as $J(p)$ is $G_p$-invariant (of
course here $J(p)$ is defined with respect to $\langle\, ,\,\rangle$),
$T_{p}M=\zdi_{j=1}^{k}E_{j}$ and $J(p)_{\zbv E_{j}}=a_{j}Id_{\zbv
E_{j}}$ where each $E_j$ is $G_p$-invariant, $a_{j}\znoi 0$,
$\langle E_{j},E_{\zlma}\rangle=0$ and $a_{j}\znoi a_{\zlma}$ if $j\znoi\zlma$.

Besides one may suppose $a_{1},\ldots,a_{k}$ rationally independent
by taking, if necessary, a new scalar product $\langle\, ,\,\rangle'$
such that $\langle E_{j},E_{\zlma}\rangle'=0$ when $j\znoi\zlma$ and
$\langle\, ,\,\rangle'_{\zbv E_{j}}=b_{j}\langle\, ,\,\rangle_{\zbv E_{j}}$ for
suitable scalars $b_{1},\ldots,b_{k}$.

In turns this family of scalar products on $\{T_{p}M\}_{p\zpe C}$
can be construct $G$-invariant. Indeed, this is obvious for maxima
and minima since $\zm$ is $G$-invariant. On the other hand, if
$C'\zco C$ is a $G$-orbit consisting of saddles take a point $p$
in $C'$, endow $T_{p}M$ with a 'good' scalar product and extend to
$C'$ by means of the action of $G$.

It is easily seen, through the family $\{A_{p}\}_{p\zpe C}$, that
of all these scalar products on $\{T_{p}M\}_{p\zpe C}$ extend to a
Riemannian metric ${\tilde g}$ on $M$. Finally, if ${\tilde g}$ is
not $G$-invariant consider $\zsu_{g\zpe G}g^{*}({\tilde g})$.
\end{proof}

Let $Y$ be the gradient vector field of $\zm$ with respect to some
Riemannian metric ${\tilde g}$ as in Lemma \ref{lem-2}. We will
assume that $Y$ is complete by multiplying, if necessary, ${\tilde
g}$ by a suitable $G$-invariant positive function (more exactly by
$e^{(Y\zpu\zr)^{2}}$ where $\zr$ is a $G$-invariant proper
function). Since $\zm$ is non-negative and proper, the $\za$-limit
of any regular trajectory of $Y$ is a local minimum or a saddle of
$\zm$, whereas its $\zw$-limit is empty, a local maximum or a
saddle of $\zm$.

Now $Y^{-1}(0)=C$ and, by the Sternberg's Theorem, around each
$p\zpe C$ (note that the linear part of $Y$ at $p$ equals
$J(p)\colon T_{p}M\zfl T_{p}M$ defined in Lemma \ref{lem-2}) there
exist a natural $1\zmei k\zmei m-1$ and coordinates
$(x_{1},\ldots,x_{m})$ such that $p\zeq 0$ and
$Y=\zsu_{j=1}^{m}\zl_{j}x_{j}\zpar/\zpar x_{j}$ where
$\zl_{1},\ldots,\zl_{k}>0$ and $\zl_{k+1},\ldots,\zl_{m}<0$, or
$Y=a\zsu_{j=1}^{m}x_{j}\zpar/\zpar x_{j}$ where $a>0$ if $p$ is a
source (that is a minimum of $\zm$) and $a<0$ if $p$ is a sink (a
maximum of $\zm$.)

Let $I$ be the set of local minima of $\zm$, that is the set of
sources of $Y$, and $S_{i}$, $i\zpe I$, the outset of $i$ relative
to $Y$.  Obviously $G$ acts on the set $I$.

\begin{lemma}\label{lem-3}
In $M$ the family $\{ S_{i}\}_{i\zpe I}$ is locally finite and the
set $\zung_{i\zpe I}S_{i}$ dense.
\end{lemma}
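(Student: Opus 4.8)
The plan is to prove the two assertions separately, in each case exploiting that $\zm$ strictly increases along every regular trajectory of $Y$: indeed $Y\zpu\zm={\tilde g}(Y,Y)\zmai 0$, with equality precisely on $C$. In particular, if $p\zpe S_{i}$ then, since $\zF_{t}(p)$ tends to the $\za$-limit $i$ as $t\zfl-\zinf$ while $\zm(\zF_{t}(p))$ decreases, one obtains $\zm(i)=\lim_{t\zfl-\zinf}\zm(\zF_{t}(p))\zmei\zm(p)$.

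For the local finiteness I would fix $q\zpe M$ and take the open neighbourhood $U=\{x\zpe M\zbv\zm(x)<\zm(q)+1\}$ of $q$. Whenever $S_{i}\zin U\znoi\zva$, a point $p$ in this intersection satisfies $\zm(i)\zmei\zm(p)<\zm(q)+1$, so $i$ belongs to $C\zin\{x\zpe M\zbv\zm(x)\zmei\zm(q)+1\}$. As $\zm$ is proper and non-negative, this is the intersection of the closed discrete set $C$ with a compact sublevel set, hence finite; therefore only finitely many of the $S_{i}$ meet $U$, and the family $\{S_{i}\}_{i\zpe I}$ is locally finite.

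For the density I will show that the complement $M\setminus\zung_{i\zpe I}S_{i}$ has empty interior. Every regular trajectory has its $\za$-limit at a minimum or at a saddle; hence any point outside $\zung_{i\zpe I}S_{i}$ is either a critical point or lies on the outset $R_{s}$ of some saddle $s$, giving $M\setminus\zung_{i\zpe I}S_{i}\zcoi C\zun\zung_{s}R_{s}$, the last union running over the countably many saddles of $Y$. By the Sternberg normal form a saddle $s$ has $1\zmei k\zmei m-1$ positive eigenvalues, and its local unstable manifold is a $k$-dimensional disc; flowing this disc forward exhibits $R_{s}$ as a countable union of embedded submanifolds of dimension $k\zmei m-1$, so $R_{s}$ does not exceed dimension $m-1$. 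Since $C$ is a countable set of points, the whole complement does not exceed dimension $m-1<m$ and thus has empty interior; consequently $\zung_{i\zpe I}S_{i}$ is dense.

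The local finiteness is essentially immediate from the monotonicity of $\zm$ and its properness. The genuine difficulty is the density: the main task is to check that the complement of $\zung_{i\zpe I}S_{i}$ is covered by the outsets of the saddles together with $C$---which rests on the classification of $\za$-limits recalled above---and that each such outset has dimension at most $m-1$, so that the principle ``a set enclosed in dimension $m-1$ has empty interior'' from Section \ref{sec-2} applies.
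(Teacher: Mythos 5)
Your proof is correct and takes essentially the same route as the paper's: local finiteness follows from the lower bound $\zm(S_{i})\zmai\zm(i)$ combined with properness of the non-negative Morse function $\zm$, and density follows by enclosing the complement of $\zung_{i\zpe I}S_{i}$ in a countable union of flowed local unstable manifolds of saddles (dimension $\zmei m-1$, via the Sternberg model and countably many flow times) together with the countable set $C$. The only difference is cosmetic: you explicitly account for the critical points in the complement, which the paper leaves implicit.
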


\begin{proof}
First notice that $\zm(S_{i})$ is low bounded by $\zm(i)$. But $I$
is a discrete set and $\zm$ a non-negative proper Morse function,
so in every compact set $\zm^{-1}((-\zinf,a])$ there are only a
finite number of elements of $I$. Therefore $\zm^{-1}((-\zinf,a])$
and of course $\zm^{-1}(-\zinf,a)$ only intersect a finite number
of $S_i$. Finally, observe that $M=\zung_{a\zpe\mathbb
R}\zm^{-1}(-\zinf,a)$.

If the $\za$-limit of the $Y$-trajectory of $q$ is a saddle $s$,
with the local model given above there exists $t\zpe {\mathbb Q}$
such that $\zF_{t}(q)$ is close to $s$ and
$x_{k+1}(\zF_{t}(q))=\ldots=x_{m}(\zF_{t}(q))=0$. Since the
submanifold given by the equations $x_{k+1}=\ldots=x_{m}=0$ has
dimension $\zmei m-1$ and ${\mathbb Q}$  and the set of saddles
are countable, it follows that the set of points coming from a
saddle may be enclosed in dimension $m-1$ and its complementary,
that is $\zung_{i\zpe I}S_{i}$, has to be dense.
\end{proof}

The vector field $Y$ has no rivets since all its singularities are
isolated with index $\zmm 1$, therefore it has no chain; moreover
the regular trajectories are not periodic.

For each $i\zpe I$, let $ {\mathcal L} _{i}$ be a control family on $T_{i}M$ with
respect to the  action of the isotropy group $G_i$ of $G$ at $i$, such that if $g(i)=i'$
then $g$ transforms $ {\mathcal L} _{i}$ in $ {\mathcal L} _{i'}$ . These families can be constructed as
follows: for every orbit of the action of $G$ on $I$ choose a point $i$ and $k_{i}m+1$
different half-lines in general position, where $k_i$ is the order of $G_i$; now $G_i$-saturate
this first family for giving rise to $ {\mathcal L} _{i}$. For other points $i'$ in the same orbit
choose $g\zpe G$ such that $g(i)=i'$ and move $ {\mathcal L} _{i}$ to $i'$ by means of $g$.

Let  $ {\mathcal L} $ be the set of all elements of $ {\mathcal L} _{i}$, $i\zpe I$. By Proposition
\ref{prop-1} each element of  $ {\mathcal L} $  is the linear $\za$-limit of just one trajectory of $Y$;
let ${\mathcal T}$ be the set of such trajectories. Clearly $G$ acts on ${\mathcal T}$, since $Y$
and ${\mathcal L}$ are $G$-invariant, and the set of orbits of this action is countable. Therefore this
last one can be regarded as a family $\{P_{n}\}_{n\zpe {\mathbb N}' }$ where
$ {\mathbb N}' \zco {\mathbb N}- \{0,1\}$, each $P_n$ is a $G$-orbit
and $P_{n}\znoi P_{n'}$ if $n\znoi n'$.

In turns, in each $T\zpe P_n$ one may choose $n-1$ different points in such a way that if $T'=g(T)$ then
$g$ sends the points considered in $T$ to those of $T'$. Denoted by $W_n$ the set of all points chosen
in the trajectories of $P_n$.

Since $\{S_{i}\}_{i\zpe I}$ is locally finite (Lemma \ref{lem-3}),
the set $W=\zung _{n \zpe{\mathbb N}' }W_n$ is discrete,  countable, closed and $G$-invariant.
Therefore there exists a $G$-invariant function
$\zq:M\zfl \mathbb R$, which is non negative and bounded,
such that $\zq^{-1}(0)= W$. Set $Y=\zf Z$. One has:

\begin{enumerate}[label={\rm (\alph{*})}]
\item $G$ is a subgroup of $\Aut(X)$.

\item $X^{-1}(0)= Y^{-1}(0)\zun W$, the rivets of $X$ are just the points of $W$ and $X$
has no periodic regular trajectories.

\item $X$ and $Y$  have the same sources, sinks and saddles.
Moreover if $R_i$ , $i\zpe I$,  is the $X$-outset of $i$ , then $R_{i}\zco S_{i}$ and
$\zung_{i\zpe I}(S_{i}-R_{i})\zco \zung_{T\zpe P_{n},n\zpe\mathbb{N}'}T$,
so $\{R_{i}\}_{i\zpe I}$ is locally finite and $\zung_{i\zpe I}R_{i}$ is dense.

\item Let $C_T$, $T\zpe P_n$, $n\zpe {\mathbb N}' $, be the family of $X$-trajectories of
$T-W$ endowed with the order induced by that of $T$ as
$Y$-trajectory. Then $C_{T}$ is a chain of $X$ of order $n$ whose
rivets are the points of $T\zin W$
and whose $\za$-limit and linear $\za$-limit are those of $T$.
Besides $C_T$, $T\zpe P_n$, are the only chain of $X$ of order $n$.
\end{enumerate}

As each $P_n$ is a $G$-orbit in $ {\mathcal T} $, the group $G$ acts on the set of
chains of $X$ and every $\{C_{T}\zbv T\zpe P_{n}\}$ is an orbit. Thus $G$ acts transitively
on  the set of $\za$-limit and on that of linear $\za$-limit of the chains  $C_T$, $T\zpe P_n$.
Recall that:

\begin{lemma}\label{lem-4}
Any map $\zf:{\mathbb R}^{k}\zfl{\mathbb R}^{s}$ such that
$\zf(ay)=a\zf(y)$, for all $(a,y)\zpe{\mathbb R}^{+}\zpor{\mathbb
R}^{k}$, is linear.
\end{lemma}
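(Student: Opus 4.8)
The plan is to exploit the standing $C^{\zinf}$ convention of the paper, under which the word ``map'' means smooth map; thus $\zf$ is in particular differentiable at the origin, and this is precisely the regularity that upgrades positive homogeneity of degree one into genuine linearity. Note that some differentiability is indispensable: without it the statement fails, as $\zf(x)=|x|$ on ${\mathbb R}$ (which satisfies $\zf(ax)=a\zf(x)$ for $a>0$ yet is not linear) already shows. So the crux is to use smoothness at $0$, not merely continuity.

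First I would record the normalization $\zf(0)=0$. Setting $y=0$ in the hypothesis gives $\zf(0)=\zf(a\zpu 0)=a\zf(0)$ for every $a\zpe{\mathbb R}^{+}$, and choosing $a\znoi 1$ forces $\zf(0)=0$.

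Next I would compute the differential $D\zf(0)\colon{\mathbb R}^{k}\zfl{\mathbb R}^{s}$ and show it coincides with $\zf$. Fix $y\zpe{\mathbb R}^{k}$ and evaluate the definition of the derivative along the ray $t\mapsto ty$ as $t\zta 0^{+}$. By differentiability at $0$ together with $\zf(0)=0$ one has $\zf(ty)=D\zf(0)(ty)+o(t)=tD\zf(0)(y)+o(t)$, while the homogeneity hypothesis gives $\zf(ty)=t\zf(y)$. Equating the two expressions and dividing by $t>0$ yields $\zf(y)=D\zf(0)(y)+o(1)$, and letting $t\zta 0^{+}$ forces $\zf(y)=D\zf(0)(y)$. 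Since $y$ was arbitrary, $\zf=D\zf(0)$, which is linear.

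I do not anticipate any serious obstacle here; the single point demanding care is that the conclusion genuinely relies on the smoothness (or at least differentiability at the origin) built into the $C^{\zinf}$ convention, rather than following from the homogeneity relation alone.
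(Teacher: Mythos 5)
Your proof is correct. The paper in fact gives no proof of this lemma at all: it is introduced with ``Recall that:'' as a classical fact, and Remark \ref{rem-2} merely notes that it fails for maps that are only continuous. Your argument --- $\varphi(0)=0$, then differentiability at the origin combined with positive $1$-homogeneity along rays forces $\varphi=D\varphi(0)$ --- is the standard proof of this classical statement, and your counterexample $\varphi(x)=|x|$ correctly identifies that differentiability at the origin (supplied here by the paper's $C^{\infty}$ convention) is exactly what is needed, in agreement with Remark \ref{rem-2}.
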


\begin{remark}\label{rem-2}
{\rm As it is well known, the foregoing lemma does not hold for
continuous maps (in this work maps are $C^{\zinf}$ unless another
thing is stated).}
\end{remark}

\begin{proposition}\label{prop-2}
Given $f\zpe\Aut(X)$ and $i\zpe I$ there exists $(g,t)\zpe G\zpor  {\mathbb R}$
such that $f=g\zci \zF_t$ on $R_i$.
\end{proposition}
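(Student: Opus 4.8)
The plan is to strip off one element of $G$ and one flow parameter so as to reduce $f$ to the identity on $R_i$, controlling the linearization at $i$ by Lemma~\ref{lem-1} and then passing from this linear datum to all of $R_i$ by Lemma~\ref{lem-4}. First I would record that $f$, being an automorphism of $X$, preserves the entire dynamical picture: it carries singularities to singularities with conjugate linear part, regular trajectories to regular trajectories and rivets to rivets, hence chains to chains while preserving their order, their $\za$-limit and their linear $\za$-limit. In particular $f(i)=i'$ is again a source, so $i'\zpe I$ since $X$ and $Y$ have the same sources, and $f$ maps $R_i$ diffeomorphically onto $R_{i'}$.

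The crucial observation, read off from the construction of the sets $W_n$, is that two trajectories of $ {\mathcal T} $ lie in the same $G$-orbit if and only if the associated chains of $X$ have the same order: the orbit $P_n$ received exactly $n-1$ marked points, so $C_T$ has order $n$ precisely when $T\zpe P_n$, and the labels $n$ are pairwise distinct. Picking one chain issuing from $i$ (there is one, as $ {\mathcal L} _i\znoi\zva$) and comparing it with its $f$-image, this principle produces $g_{1}\zpe G$ with $g_{1}(i')=i$; replacing $f$ by $h:=g_{1}\zci f\zpe\Aut(X)$ I may henceforth assume $h(i)=i$, so that $h$ preserves $R_i$.

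Now $dh_{i}\zpe GL(T_{i}M)$ permutes $ {\mathcal L} _i$ and, since $h$ respects chain orders, maps each ``order level'' of $ {\mathcal L} _i$ into itself. The same orbit-equals-order principle identifies these order levels with the $G_i$-orbits of $ {\mathcal L} _i$: if $L,L'\zpe {\mathcal L} _i$ have equal order then some $g\zpe G$ carries the corresponding trajectories one to the other, and $g(i)=i$ forces $g\zpe G_i$. Hence $dh_i$ sends each $G_i$-orbit into itself, and Lemma~\ref{lem-1} gives $dh_i=a\,d(h_0)_i$ with $a\zpe{\mathbb R}^{+}$ and $h_0\zpe G_i\zco G$. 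Setting $h':=h_0^{-1}\zci h\zpe\Aut(X)$ I obtain $h'(i)=i$ and $dh'_i=a\,\mathrm{Id}$.

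Finally I would invoke Proposition~\ref{prop-1} to identify $R_i$ with ${\mathbb R}^{m}$ in coordinates for which $X=b\zsu_{j}x_{j}\zpar/\zpar x_{j}$, $b\zpe{\mathbb R}^{+}$, with flow $\zF_{s}(x)=e^{bs}x$. As $h'$ commutes with $\zF_{s}$ one has $h'(e^{bs}x)=e^{bs}h'(x)$, that is $h'(cx)=ch'(x)$ for every $c\zpe{\mathbb R}^{+}$; by Lemma~\ref{lem-4} the map $h'$ is linear, whence $h'=dh'_i=a\,\mathrm{Id}=\zF_{t}$ with $t=b^{-1}\log a$. Therefore $f=g_{1}^{-1}\zci h_{0}\zci\zF_{t}=g\zci\zF_{t}$ on $R_i$ with $g:=g_{1}^{-1}\zci h_{0}\zpe G$, as required. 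The one genuinely delicate point is the identification of the order levels of $ {\mathcal L} _i$ with the $G_i$-orbits: it is exactly what makes Lemma~\ref{lem-1} applicable, and it is the reason the $W_n$ were built with pairwise distinct numbers of marked points; the remaining steps are the routine verification that $\Aut(X)$ preserves the dynamical invariants together with direct applications of Lemmas~\ref{lem-1} and~\ref{lem-4}.
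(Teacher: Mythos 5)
Your proposal is correct and follows essentially the same route as the paper's own proof: reduce to the case $f(i)=i$ using that $G$ acts transitively on the $\za$-limits of chains of a fixed order, show that the differential of $f$ at $i$ preserves each $G_i$-orbit in ${\mathcal L}_i$ so that Lemma \ref{lem-1} yields $f_{*}(i)=c\,h_{*}(i)$ with $h\zpe G_i$, and then combine Proposition \ref{prop-1} with Lemma \ref{lem-4} to conclude that, after stripping off $h$, the map is linear on $R_i$, hence a multiple of the identity and thus a flow map. The only differences are presentational: your explicitly stated ``orbit-equals-order'' principle is exactly what the paper uses implicitly via the fact that the $C_T$, $T\zpe P_n$, are the only chains of order $n$, and your closing computation $t=b^{-1}\log a$ is left implicit in the paper.
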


\begin{proof} Consider $n\zpe {\mathbb N}' $ such that $i$ is the $\za$-limit of some
chain of order $n$. Then $f(i)$ is the $\za$-limit of some
chain of order $n$ and there exists $g\zpe G$ such that $g(i)=f(i)$; therefore
$(g^{-1}\zci f)(i)=i$, which reduces the problem, up to change of notation, to consider the case where $f(i)=i$.

Note that every $L\zpe {\mathcal L} _{i}$ is the linear $\za$-limit of some $T\zpe {\mathcal T}$,
so the linear $\za$-limit of $C_T$; moreover $ {\mathcal L} _{i}$ is the family of linear $\za$-limit
of all chains starting at $i$. As $f$ sends chains starting at $i$ into chains starting at $i$
because $f$ is an automorphism of $X$, follows that $f_{*}(i)$ sends $ {\mathcal L} _{i}$ into itself.

On the other hand, since for any $T\zpe P_n$ one has $f(C_{T})=C_{T'}$ where $T'$ belongs to $P_n$ as well,
it has to exists $h\zpe G$ that sends the linear $\za$-limit of $C_T$ to the linear $\za$-limit of $C_{T'}$.
But both chains start at $i$ so $h\zpe G_i$, which implies that $f_{*}(i)$ preserves each orbit of the
action of $G_i$ on $ {\mathcal L} _{i}$. From Lemma \ref{lem-1} follows that $f_{*}(i)=ch_{*}(i)$
with $c>0$ and $h\zpe G_i$. Therefore considering $h^{-1}\zci f$ we may suppose, up to a new
change of notation, that $f_{*}(i)=cId$, $c>0$.

Now Proposition \ref{prop-1} allows us to regard $f$ on $R_{i}$ as a map
 $\zf:{\mathbb R}^{m}\zfl{\mathbb R}^{m}$ that preserves the vector field
$X=a\zsu_{j=1}^{m}x_{j}\zpar/\zpar x_{j}$, $a\zpe{\mathbb R}^{+}$.
But this last property implies that $\zf(bx)=b\zf(x)$ for any
$b\zpe{\mathbb R}^{+}$ and $x\zpe{\mathbb R}^{m}$; therefore $\zf$
is linear (Lemma \ref{lem-4}). Since $f_{*}(i)=cId$ one has $\zf=cId$, $c>0$;
that is to say $\zf$ and $f_{\zbv R_{i}}$ equal $\zF_t$ for some $t\zpe {\mathbb R}$.
\end{proof}

Given $f\zpe \Aut(X)$, consider a family $\{(g_{i},t_{i})\}_{i\zpe}$ of
elements of $G\zpor {\mathbb R}$ such that $f=g_{i}\zci\zF_{t_{i}}$ on each $R_i$.
We will show that $f=g\zci\zF_t$ for some $g\zpe G$, $t\zpe {\mathbb R}$.
\begin{lemma}\label{lem-5}
If all $g_i$ are equal then all $t_i$ are equal too.
\end{lemma}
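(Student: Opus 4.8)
The plan is to reduce the statement to the comparison of a single translation parameter and then to play off the \emph{density} of $\zung_{i\zpe I}R_{i}$ against the \emph{connectedness} of $M$. Write the common value as $g_i=g$; since $G\zco\Aut(X)$ (property (a) above) and $f\zpe\Aut(X)$, the map $h:=g^{-1}\zci f$ lies in $\Aut(X)$ and satisfies $h=\zF_{t_i}$ on each $R_i$. Fix a reference source $i_0\zpe I$ and set $J_0=\{i\zpe I:t_i=t_{i_0}\}$ and $J_1=I-J_0$. The goal is to prove $J_1=\zva$, which is exactly the assertion that all $t_i$ coincide.

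First I would pass to closures. Both $h$ and each $\zF_{t_i}$ are continuous and agree on $R_i$, so they agree on $\overline{R_i}$; hence $h=\zF_{t_i}$ on $\overline{R_i}$ for every $i$. Writing $\zW_k=\zung_{i\zpe J_k}R_i$, the local finiteness of $\{R_i\}_{i\zpe I}$ (property (c) above) yields $\overline{\zW_k}=\zung_{i\zpe J_k}\overline{R_i}$, and the density of $\zung_{i\zpe I}R_i$ (also property (c)) gives $\overline{\zW_0}\zun\overline{\zW_1}=M$.

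The key step is to show that $\overline{\zW_0}\zin\overline{\zW_1}\zcoi X^{-1}(0)$. Suppose $p\zpe\overline{\zW_0}\zin\overline{\zW_1}$ is a regular point of $X$. By the previous paragraph $p\zpe\overline{R_j}$ for some $j\zpe J_0$ and $p\zpe\overline{R_{j'}}$ for some $j'\zpe J_1$, so that $\zF_{t_j}(p)=h(p)=\zF_{t_{j'}}(p)$, i.e. $\zF_{t_j-t_{j'}}(p)=p$. As $X$ has no periodic regular trajectory (property (b) above), a regular point can be fixed by $\zF_s$ only when $s=0$; thus $t_j=t_{j'}$, forcing $t_{j'}=t_{i_0}$ and $j'\zpe J_0$, a contradiction. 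Hence every point of $\overline{\zW_0}\zin\overline{\zW_1}$ is a singularity of $X$.

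Finally I would invoke a dimension argument to finish. Since $X^{-1}(0)=Y^{-1}(0)\zun W$ (property (b)) is a union of two disjoint closed discrete sets, it is itself closed and discrete; because $M$ is connected of dimension $m\zmai 2$, the complement $M-X^{-1}(0)$ remains connected. Yet by the preceding paragraph this complement is the disjoint union of the two relatively closed sets $\overline{\zW_0}-X^{-1}(0)$ and $\overline{\zW_1}-X^{-1}(0)$, each of which is nonempty as soon as $i_0\zpe J_0$ and $J_1\znoi\zva$; this contradicts connectedness, so $J_1=\zva$ and all $t_i$ agree. The main obstacle is precisely that $\zung_{i\zpe I}R_i$, although dense, need not be connected---its basin walls are stable manifolds of saddles and carry codimension one---so a naive ``locally constant on a connected set'' argument is unavailable; the remedy is to confine all possible overlaps of the closures to the $0$-dimensional set $X^{-1}(0)$ and then to disconnect $M-X^{-1}(0)$, which is exactly where the hypothesis $m\zmai 2$ enters.
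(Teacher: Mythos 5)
Your proof is correct and follows essentially the same route as the paper: reduce to $g_i=e_G$, pass to closures of the outsets, split $M$ into two closed unions according to the value of $t_i$, locate their overlap inside the singular set via the absence of periodic regular trajectories, and contradict connectedness of $M$ minus a small set. The only (cosmetic) difference is that you delete the whole closed discrete set $X^{-1}(0)$, whereas the paper deletes just the countable intersection $D_1\zin D_2$; both rest on the same fact that a connected manifold of dimension $\zmai 2$ stays connected after removing a countable set.
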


\begin{proof} The proof reduces to the case where all $g_{i}=e_{G}$ (neutral element
of $G$) by composing $f$ on the left with a suitable element of $G$.
Obviously $f=\zF_{t_{i}}$ on ${\overline R}_i$.

Assume that the set of these $t_i$ has more than one
element. Fixed one of them, say $t$, set $D_1$ the union of
all ${\overline R}_{i}$ such that
$t_{i}=t$ and $D_2$ the union of all
${\overline R}_{i}$ such that $t_{i}\znoi t$.
Since $\{R_{i}\}_{i\zpe I}$ is locally finite and $\zung_{i\zpe I}R_{i}$ dense,
the family  $\{ {\overline R}_{i}\}_{i\zpe I}$ is locally finite too and
 $\zung_{i\zpe I}{ {\overline R}_{i}}=M$. Thus $D_1$ and $D_2$ are closed and $M=D_{1}\zun D_{2}$.
On the other hand if $p\zpe D_{1}\zin D_{2}$ then $\zF_{t}(p)=\zF_{t_{i}}(p)$
for some $t\znoi t_i$, so $\zF_{t-t_{i}}(p)=p$ and $X(p)=0$ since $X$ has
no periodic regular trajectories, which implies that $ D_{1}\zin D_{2}$ is countable. Consequently
$M-D_{1}\zin D_{2}$ is connected. But $M-D_{1}\zin
D_{2}=(D_{1}-D_{1}\zin D_{2})\zun(D_{2}-D_{1}\zin D_{2})$ where
the terms of this union are non-empty, disjoint and closed in
$M-D_{1}\zin D_{2}$, {\it contradiction}.
\end{proof}

Choose a $i_{0}\zpe I$. Composing $f$ on the left with a suitable element of $G$
we may assume $g_{i_{0}}=e_G$. On the other hand, $f$ sends each orbit of
the actions of $G$ on $I$ into itself because the points of every orbit
are just the starting points of the chains of order $n$ for some $n\zpe {\mathbb N}' $.
Thus $f$ equals a permutation on each orbit of $G$ in $I$ and there exists $\zlma>0$
such that $f^{\zlma}$ is the identity on these orbits; for instance $\zlma=r!$ where $r$ is the order of $G$.

Now suppose that $f^{\zlma}=h_{i}\zci\zF_{s_{i}}$ on $R_i$, $i\zpe
I$. Then $h_{i}\zpe G_i$. Since the order of $G_i$ divides that of
$G$ one has $f^{r\zlma}=\zF_{rs_{i}}$ on $R_i$. In short, there
exists a natural number $k>0$ such that  $f^{k}=\zF_{u_{i}}$ on
$R_i$, and by Lemma \ref{lem-5} one has $f^{k}=\zF_{u}$ on every
$R_i$ for some $u\zpe {\mathbb R}$.

In turns, composing $f$ with $\zF_{-u/k}$ we may assume, without lost
of generality, that  $f^{k}=Id$ on $M$.

On $R_{i_{0}}$ one has  $f^{k}=\zF_{kt_{i_{0}}}$, so $t_{i_{0}}=0$ and $f=Id$.
But $f$ spans a finite group of diffeomorphisms of $M$, which assure us
that $f$ is an isometry of some Riemannian metric $\hat g$ on  $M$. Recall that
isometries on connected manifolds are determined by the $1$-jet at
any point. Therefore from $f=Id$ on $R_{i_{0}}$ follows $f=Id$ on $M$.

In other words the map $(g,t)\zpe G\zpor {\mathbb R}\zfl g\zci\zF_{t}\zpe\Aut(X)$ is
an epimorphism. Now the proof of Theorem \ref{thm-1} will be finished showing that it is an injection.

Assume that $g\zci\zF_{t}=Id$ on $M$. As $g^{r}=e_G$ follows $\zF_{rt}=Id$
whence $t=0$ because $X$ has no periodic regular trajectories. Thus $g=e_G$.

\begin{remark}\label{rem-3}
{\rm From the proof of Theorem \ref{thm-1} above, follows that this theorem holds for
$X'=\zr X$ where $\zr\colon M\zfl {\mathbb R}$ is any  $G$-invariant positive
bounded function. Indeed, reason as before with $(\zr\zq)Y$ instead of $\zq Y$. }
\end{remark}

\section{Actions on manifolds with boundary}\label{sec-4}

Let $P$ be an $m$-manifold with non-empty boundary $\zpar P$. Set
$M=P-\zpar P$. First recall that there always exist a manifold
$\tilde P$ without boundary and a function ${\tilde\zf}:{\tilde
P}\zfl{\mathbb R}$ such that zero is a regular value of
${\tilde\zf}$ and $P$ diffeomorphic to
${\tilde\zf}^{-1}((-\zinf,0])$; so let us identify $P$ and
${\tilde\zf}^{-1}((-\zinf,0])$.

Now assume that $G$ is a finite subgroup of $\Diff(P)$, $P$ is
connected and $m\zmai 2$. Then $G$ sends $\zpar P$ to $\zpar P$
and $M$ to $M$; thus by restriction $G$ becomes a finite subgroup
of $\Diff(M)$.

Let $X'$ be a vector field as in the proof of Theorem \ref{thm-1} with respect
to $M$ and $G\zco \Diff(M)$.
By Proposition \ref{prop-A1} in the Appendix  (Section \ref{sec-A})
applied to $M$ and $X'$, there exists a bounded function
 $\zf\colon{\tilde P}\zfl{\mathbb R}$, which is positive on $M$ and vanishes  elsewhere, such that the
 vector field $\zf X'$ on $M$ prolongs by zero to a (differentiable) vector field on $ {\tilde P}$.

\begin{lemma}\label{lem-6}
For every $g\zpe G$ the vector field $X_g$ equal to $(\zf\zci g)X'$ on $M$ and vanishing elsewhere is differentiable.
\end{lemma}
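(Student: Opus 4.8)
The plan is to localize the problem at the boundary. Set $P={\tilde\zf}^{-1}((-\zinf,0])$, so that $\zpar P={\tilde\zf}^{-1}(0)$ and ${\tilde P}-P={\tilde\zf}^{-1}((0,\zinf))$. Away from $\zpar P$ there is nothing to check: on the open set $M$ the field $X_g=(\zf\zci g)X'$ is the product of the smooth function $\zf\zci g$ with the smooth field $X'$, hence smooth, while on the open set ${\tilde P}-P$ it is identically zero. So the whole question is whether $X_g$, prolonged by zero, is $C^{\zinf}$ across the hypersurface $\zpar P$.

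The device I would use is to recognize $X_g$ as a pushforward of $\zf X'$. Since $X'$ is $G$-invariant one has $g_{*}X'=X'$, and therefore, for $p\zpe M$,
$$\bigl((g^{-1})_{*}(\zf X')\bigr)(p)=\zf(g(p))\,(g^{-1})_{*}\bigl(X'(g(p))\bigr)=\zf(g(p))\,X'(p)=X_g(p).$$
Thus $X_g=(g^{-1})_{*}(\zf X')$ on $M$.

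Now I would invoke Proposition \ref{prop-A1}: $\zf X'$ prolongs by zero to a smooth vector field on ${\tilde P}$ that vanishes on all of ${\tilde P}-M$. Restricted to $P$ this prolongation is smooth up to $\zpar P$ and, since it coincides with the zero field on the open set ${\tilde P}-P$ whose closure contains $\zpar P$, it is flat (vanishes to infinite order) along $\zpar P$. As $g$, hence $g^{-1}$, is a diffeomorphism of $P$ carrying $\zpar P$ onto itself, the pushforward $(g^{-1})_{*}(\zf X')$ is again smooth up to $\zpar P$ and flat along it, flatness being preserved under a boundary-preserving diffeomorphism. Consequently, near $\zpar P$ the field $X_g$ agrees with the zero-prolongation of a field that is flat along $\zpar P$, and such a prolongation is $C^{\zinf}$; combined with the two easy open cases this gives differentiability of $X_g$ on all of ${\tilde P}$.

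The point demanding care is the last flatness assertion. To make it rigorous I would first extend $g$ to a diffeomorphism ${\hat g}$ between open neighborhoods of $P$ in ${\tilde P}$---available from the collar neighborhood theorem because $g$ is a diffeomorphism of the manifold-with-boundary $P$---and then work in boundary coordinates $(x_{1},\ldots,x_{m})$ with $\zpar P=\{x_{m}=0\}$ and $P=\{x_{m}\zmei 0\}$. There one checks by the chain rule that every partial derivative of each component of $({\hat g}^{-1})_{*}(\zf X')$ vanishes on $\{x_{m}=0\}$, so that gluing with the zero field across $\{x_{m}=0\}$ produces a smooth field. This is the main obstacle; the remaining verifications are routine.
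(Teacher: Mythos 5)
Your proof is correct, and its core identity is the same as the paper's: you both realize $X_g$ as the image of the smooth prolonged field $\zf X'$ (given by Proposition \ref{prop-A1}) under a diffeomorphism, using the $G$-invariance of $X'$. Where you diverge is the mechanism for smoothness across $\zpar P$. The paper extends $g$ only \emph{locally}: near each $p\zpe\zpar P$ it produces a diffeomorphism ${\hat g}\colon A\zfl B$ between open two-sided neighborhoods in $\tilde P$, with ${\hat g}(A\zin M)\zco M$, ${\hat g}(A-P)\zco{\tilde P}-P$ and ${\hat g}(A\zin\zpar P)\zco\zpar P$; these inclusions force $(X_g)_{\zbv A}={\hat g}_{*}^{-1}(X_{\zf})_{\zbv B}$ on the whole of $A$ (zero matches zero on the exterior and on the boundary), so $X_g$ is smooth simply as the pullback of an already smooth field --- no flatness argument at all. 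You instead stay one-sided: you note that $X_\zf$ is flat along $\zpar P$ (correctly, because it vanishes on the open exterior whose closure contains $\zpar P$, as $0$ is a regular value of $\tilde\zf$), that a boundary-preserving diffeomorphism of $P$ preserves smoothness-up-to-boundary and flatness, and then you glue with zero via the standard flat-extension lemma. Each step is a standard fact and the chain is sound. Two remarks: first, your appeal to the collar neighborhood theorem for a global extension of $g$ is both shaky (a diffeomorphism of $P$ need not be product-like on a collar, so the naive collar extension is not smooth) and unnecessary --- the flatness computation only needs boundary charts, or equivalently the purely local extension of $g$ that the definition of smoothness on a manifold with boundary plus the inverse function theorem already provides. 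Second, once you have such a local two-sided extension in hand, the flatness detour becomes avoidable: you could conclude directly as the paper does, since the pullback of $X_\zf$ under it coincides with $X_g$ on a full neighborhood of the boundary point. So your route costs three extra (standard) lemmas where the paper's costs one geometric observation about where the extension sends each side of $\zpar P$.
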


\begin{proof} Obviously $X_g$ is smooth on $ {\tilde P}-\zpar P$. Now consider any $p\zpe \zpar P$.
As $g\colon P\zfl P$ is a diffeomorphism, there exist an open neighborhood $A$ of $p$
on $ {\tilde P}$ and a map $ {\hat g}\colon A\zfl {\tilde P}$ such that $ {\hat g}=g$ on $A\zin P$.
Shrinking $A$ allows to assume that $B= {\hat g}(A)$ is open,  $ {\hat g}\colon A\zfl B$ is
a diffeomorphism and $A-\zpar P$ has two connected components $A_{1},A_{2}$ with $A_{1}\zco M$
and $A_{2}\zco {\tilde P}-P$; note that
${\hat g}(A_{1})\zco M$, ${\hat g}(A_{2})\zco{\tilde P}-P$ and $ {\hat g}(A\zin\zpar P)\zco \zpar P$.

Thus $(X_{g})_{\zbv A}= {\hat g}_{*}^{-1}(X_{\zf})_{\zbv B}$ since $X'$ is $G$-invariant.
\end{proof}

On $P$ set $X=\zsu_{g\zpe G}X_g$. Then $X_{\zbv\zpar P}=0$ and  $X_{\zbv M}=\zr X'$
where $\zr=\zsu_{g\zpe G}(\zf_{\zbv M})\zci g$. Clearly $\zr\colon M\zfl {\mathbb R}$
is positive bounded and $G$-invariant, so by Remark \ref{rem-3} Theorem \ref{thm-1}
also holds for $X_{\zbv M}$. Moreover $X$ is complete on $P$.

If $f\colon P\zfl P$ belongs to $\Aut(X)$ then $f_{\zbv M}$ belongs to $\Aut(X_{\zbv M})$
and $f=g\zci\zF_t$ on $M$ and by continuity on $P$.
In other words, Theorem \ref{thm-1} also holds for any connected
manifold $P$, of dimension $\zmai 2$, with non-empty boundary.

\section{Appendix}\label{sec-A}

In this appendix we prove Proposition \ref{prop-A1} that
was needed in the foregoing section. First consider a family of compact sets
$\{K_{r}\}_{r\zpe\mathbb N}$ in an open set $A\zco{\mathbb
R}^{n}$, such that $K_{r}\zco{\zmontar \zci\over{K}}_{r+1}$,
$r\zpe\mathbb N$, and $\zung_{r\zpe\mathbb N}K_{r}=A$.

\begin{lemma}\label{lem-A1}
Given a family of positive continuous functions
$\{f_{r}:A\zfl{\mathbb R}\}_{r\zpe\mathbb N}$ there exists a
function $f:A\zfl{\mathbb R}$ vanishing on ${\mathbb R}^n-A$ and positive on
$A$ such that, whenever $r\zpe\mathbb N$, one has $f\zmei f_j$,
$0\zmei j\zmei r$, on $A-K_r$.
\end{lemma}
\begin{proof} One may assume $f_{0}\zmai f_{1}\zmai\ldots \zmai
f_{r}\zmai\ldots $ by taking $min\{f_{0},\ldots ,f_{r}\}$ instead of
$f_r$ if necessary. Consider  functions $\zf_{r}:{\mathbb
R}^{n}\zfl [0,1]\zco{\mathbb R}$, ${r\zpe\mathbb N}$, such that
each $\zf_{r}^{-1}(0)=K_{r-1}\zun({\mathbb R}^{n}-{\zmontar
\zci\over{K}}_{r+1})$ [as usual $K_{j}=\zva$ if $j\zmei -1$].

Let $D$ be a partial derivative operator. Multiplying each $f_r$
by some $\ze_{r}>0$ small enough allows to suppose, without loss
of generality, $\zf_{r}\zmei f_{r}/2$  on $A$ and $\zbv
D\zf_{r}\zbv \zmei 2^{-r}$ on ${\mathbb R}^{n}$ for any $D$ of
order $\zmei r$.

Set $f=\zsu_{r\zpe\mathbb N}\zf_{r}$. By the second condition on
functions $\zf_r$, whenever $\tilde D$ is a partial derivative
operator the series $\zsu_{r\zpe\mathbb N}{\tilde D}\zf_{r}$
uniformly converges on ${\mathbb R}^{n}$, which implies that $f$
is differentiable. On the other hand it is easily checked that
$f({\mathbb R}^{n}-A)=0$, $f>0$ on $A$ and $f\zmei f_{r}\zmei\ldots
\zmei f_{0}$ on $A-K_{r}$.
\end{proof}

One will say that a function defined around a point $p$ of a
manifold is {\it flat at $p$} if its $\zinf$-jet at this point
vanishes. Note that given a function $\zq$ on a manifold and a
function $\zt:{\mathbb R}\zfl [0,1]\zco {\mathbb R}$ flat at the
origin and positive on ${\mathbb R}-\{0\}$ (for instance
$\zt(t)=e^{-1/t^{2}}$ if $t\znoi 0$ and $\zt(0)=0$), then
$\zt\zci\zq$ is flat at every point of $(\zt\zci\zq)^{-1}
(0)=\zq^{-1} (0)$ and $Im(\zt\zci\zq)\zco [0,1]$.

\begin{lemma}\label{lem-A2}
Consider an open set $A$ of a manifold $M$ and a function
$f:A\zfl{\mathbb R}$. Then there exists a function
$\zf:M\zfl{\mathbb R}$ vanishing on $M-A$ and positive on $A$,
such that the function ${\hat f}:M\zfl{\mathbb R}$ given by ${\hat
f}=\zf f$ on $A$ and ${\hat f}=0$ on $M-A$ is differentiable.
\end{lemma}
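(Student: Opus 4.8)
The plan is to reduce to a local assertion in $\mathbb{R}^n$ and there to multiply $f$ by $\tau\circ\psi$, where $\psi$ comes from Lemma \ref{lem-A1} and $\tau(t)=e^{-1/t^2}$ is the flat function of the preceding observation. Note first that $\hat f$ is automatically smooth on $A$ (there it equals $\varphi f$) and on the interior of $M-A$ (there it vanishes), so all the work concentrates on the points of $\overline A-A$. To pass from $\mathbb{R}^n$ to $M$ I would fix a countable, locally finite atlas $\{(U_k,\eta_k)\}_{k\in\mathbb N}$ with a subordinate partition of unity $\{\rho_k\}$, each $\rho_k$ of compact support in $U_k$, and produce for every $k$ a smooth $\varphi_k\geq 0$ on $U_k$, positive on $A\cap U_k$, vanishing on $U_k-A$, with $\varphi_k f$ prolonging by zero to a smooth function on $U_k$. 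Then $\varphi=\sum_k\rho_k\varphi_k$ is smooth on $M$, positive on $A$ (at $p\in A$ some $\rho_k(p)>0$, and there $\varphi_k(p)>0$, all terms being nonnegative), zero on $M-A$, and $\varphi f=\sum_k\rho_k(\varphi_k f)$ is a locally finite sum of functions extending smoothly by zero, hence smooth.

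For the local case $M=\mathbb{R}^n$, let $\{K_r\}_{r\in\mathbb N}$ exhaust $A$ with $K_r\subset\mathring{K}_{r+1}$, and apply Lemma \ref{lem-A1} to the positive continuous functions $f_r(x)=\min\{1,(1+\max_{|\gamma|\leq r}|\partial^\gamma f(x)|)^{-1}\}$, obtaining $\psi$ smooth, positive on $A$, zero on $\mathbb{R}^n-A$, with $\psi\leq f_r$ on $A-K_r$. I would record two facts: since $\psi$ is continuous and vanishes off $A$, it tends to $0$ at every point of $\overline A-A$; and the construction in the proof of Lemma \ref{lem-A1} (the bounds $|D\varphi_r|\leq 2^{-r}$) shows that every partial derivative of $\psi$ stays bounded near $\overline A-A$. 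Put $\varphi=\tau\circ\psi$; by the observation preceding the lemma it is smooth, positive on $A$, zero on $\mathbb{R}^n-A$, and flat at every point of $\overline A-A$.

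It then remains to see that $\partial^\alpha(\varphi f)\to 0$ at $\overline A-A$ for every $\alpha$. By Leibniz it suffices that each $\partial^\beta\varphi\cdot\partial^{\alpha-\beta}f\to 0$. By the Fa\`a di Bruno formula $\partial^\beta\varphi$ is a combination, with coefficients bounded near the boundary (this is where the boundedness of the derivatives of $\psi$ enters), of terms $\psi^{-M}e^{-1/\psi^2}$; and the choice of the $f_r$ forces $|\partial^\gamma f|<1/\psi$ on $A-K_r$ whenever $|\gamma|\leq r$, so approaching $\overline A-A$ one has $|\partial^{\alpha-\beta}f|<1/\psi$. Hence $|\partial^\beta\varphi\cdot\partial^{\alpha-\beta}f|\leq C\,\psi^{-M-1}e^{-1/\psi^2}\to 0$, because $\psi\to 0$ and $t^{-N}e^{-1/t^2}\to 0$ as $t\to 0^+$. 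Since all derivatives of $\varphi f$ extend continuously by zero across $\overline A-A$, the zero extension $\hat f$ is $C^\infty$ (continuous extendibility of all derivatives, together with the fundamental theorem of calculus, yields differentiability of the extension).

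I expect the main obstacle to be exactly this last step. Lemma \ref{lem-A1} only dominates the \emph{values} of $\psi$, which by itself would give mere continuity of $\hat f$. What makes the argument work is the combination of two devices: feeding Lemma \ref{lem-A1} the pointwise reciprocal bounds $f_r$, so that every derivative of $f$ is dominated by $1/\psi$, and composing with the flat function $\tau$, whose super-polynomial decay $\psi^{-N}e^{-1/\psi^2}$ simultaneously absorbs the $1/\psi$–blow-up coming from \emph{all} derivatives of $f$ at once. The remaining ingredients—the derivative bounds on $\psi$ read off from the construction in Lemma \ref{lem-A1}, and the partition-of-unity bookkeeping of the first step—are routine.
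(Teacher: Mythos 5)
Your proof is correct, and its engine is the same as the paper's: Lemma \ref{lem-A1} applied to reciprocals of pointwise bounds on all derivatives of $f$ (your $f_r$; the paper's $g_j^{-1}$ with $g_j=|D'f|+1$), followed by composition with a one-variable function flat at $0$, whose decay ($e^{-1/t}$ via $\lambda_0$ in the paper, $e^{-1/t^2}$ via your $\tau$) simultaneously absorbs the growth $|\partial^\gamma f|<1/\psi$ and the negative powers of $\psi$ produced by differentiation. The differences are in execution, and worth recording. First, you reduce from $M$ to ${\mathbb R}^n$ by a locally finite atlas and a partition of unity, whereas the paper realizes $M$ as a closed submanifold of ${\mathbb R}^n$ and pulls $f$ back through a tubular neighborhood; both reductions are routine. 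Second, and more substantively, the paper avoids all Fa\`a di Bruno bookkeeping by interposing the notion of ``neatly bounded'': it checks that $\lambda_0(\rho/2)f=e^{-2/\rho}f$ has all derivatives locally bounded near $\overline A-A$, and shows that (flat)$\times$(neatly bounded) extends smoothly by zero through an induction whose key device is the factorization $\varphi(x)=\sum_i(x_i-p_i)\tilde\varphi_i(x)$ of a flat function, which settles differentiability at each boundary point directly; its multiplier is then the product $\tilde\varphi\,\lambda_0(\rho/2)$. You instead make the multiplier itself flat, $\varphi=\tau\circ\psi$, push every $\partial^\alpha(\varphi f)$ to zero at $\overline A-A$ by Leibniz and Fa\`a di Bruno, and finish with the criterion that a smooth function on $A$ all of whose derivatives tend to zero at $\overline A-A$ extends by zero to a $C^\infty$ function. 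That criterion is true, but it is the one step you compress: a segment reaching a boundary point may meet $\overline A-A$ in a complicated closed set, so one must apply the fundamental theorem of calculus on each connected component of the parameter set where the segment lies in $A$, starting the integration at an endpoint where the extension vanishes; this bounds the difference quotient by $\sup|\partial_j(\varphi f)|$ along the segment, which tends to $0$. Your parenthetical names exactly this argument, and with it spelled out your proof is complete --- the paper's neatly-bounded induction buys that boundary differentiability for free, at the cost of the auxiliary flat factor $\tilde\varphi$ and the extra notion. (A minor simplification: the local boundedness of the derivatives of $\psi$ near $\overline A-A$ is automatic from the smoothness of $\psi$ on ${\mathbb R}^n$; there is no need to reopen the construction in Lemma \ref{lem-A1}.)
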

\begin{proof} The manifold $M$ can be seen as a closed imbedded
submanifold of some ${\mathbb R}^{n}$. Let $\zp:E\zfl M$ be a
tubular neighborhood of $M$. If the result is true for
$\zp^{-1}(A)$ and $f\zci\zp:\zp^{-1}(A)\zfl{\mathbb R}$, by
restriction it is true for $A$ and $f$. In other words, it
suffices to consider the case of an open set $A$ of ${\mathbb
R}^{n}$.

We will say that a function $\zq:A\zfl{\mathbb R}$ is {\it neatly
bounded} if, for each point $p$ of the topological boundary of $A$
and any partial derivative operator $D$, there exists an open
neighborhood $B$ of $p$ such that $\zbv D\zq\zbv$ is bounded on
$A\zin B$. First assume that $f$ is neatly bounded. Let
$\zf:{\mathbb R}^{n}\zfl{\mathbb R}$ be a function that is
positive on $A$ and flat at every point of ${\mathbb R}^{n}-A$;
then $\zf$ satisfies Lemma \ref{lem-A2}.

Indeed, only the points $p\zpe({\bar A}-A)$ need to be examined.
Consider an natural $1\zmei j\zmei n$; since $j_{p}^{\zinf}\zf=0$
near $p$ one has
$\zf(x)=\zsu_{i=1}^{n}(x_{i}-p_{i}){\tilde\zf}_{i}(x)$ and from
the definition of partial derivative follows that $(\zpar{\hat
f}/\zpar x_{j})(p)=0$. Thus $\zpar{\hat f}/\zpar
x_{j}=(\zpar\zf/\zpar x_{j})f+\zf\zpar f/\zpar x_{j}$ on $A$ and
$\zpar{\hat f}/\zpar x_{j}=0$ on ${\mathbb R}^{n}-A$, which shows
that $f$ is $C^{1}$.

Since obviously the function $\zpar f/\zpar x_{j}$ is neatly
bounded and $\zpar\zf/\zpar x_{j}$ is flat on ${\mathbb R}^{n}-A$,
the same argument as before applied to $(\zpar\zf/\zpar x_{j})f$
and $\zf\zpar f/\zpar x_{j}$ shows that $f$ is $C^{2}$ and, by
induction, the differentiability of $f$.

Let us see the general case. On $A$ the
continuous functions $\zbv Df\zbv +1$, where $D$ is any partial
derivative operator, give rise to a countable family of continuous
positive functions $g_{0},\ldots ,g_{r},\ldots $. Let
$\{K_{r}\}_{r\zpe\mathbb N}$ be a collection of compact sets such
that $K_{r}\zco {\zmontar \zci\over{K}}_{r+1}$, ${r\zpe\mathbb
N}$, and $\zung_{r\zpe\mathbb N}K_{r}=A$. By Lemma \ref{lem-A1}
there exists a function $\zr:{\mathbb R}^{n}\zfl{\mathbb R}$
vanishing on ${\mathbb R}^{n}-A$ and positive on $A$ such that
$\zr\zmei g_{j}^{-1}$, $0\zmei j\zmei r$, on $A-K_r$,
${r\zpe\mathbb N}$.

For every $k\zpe\mathbb N$ let $\zl_{k}:{\mathbb R}\zfl{\mathbb
R}$ be the function defined by $\zl_{k}(t)=t^{-k}e^{-1/t}$ if
$t>0$ and $\zl_{k}(t)=0$ elsewhere. Then the function ${\tilde
f}=\zl_{0}(\zr/2)f$ is neatly bounded on $A$. Indeed, consider any
$p\zpe({\bar A}-A)$ and any partial derivative operator $D$. Then
$D{\tilde f}$ equals a linear combination, with constant
coefficients, of  products of some partial derivatives of $\zr$, a
function $\zr^{-k}e^{-2/\zr}= \zl_{k}(\zr)e^{-1/\zr}$ and some
partial derivative $D'f$. On the other hand, there always exists a
natural $\zlma$ such that $g_{\zlma}=\zbv D'f\zbv +1$. But near
$p$ one has $e^{-1/\zr}\zbv D'f\zbv\zmei\zr\zbv D'f\zbv\zmei\zr
g_{\zlma}\zmei 1$; therefore $D\tilde f$ is bounded close to $p$.

Finally, take a function ${\tilde\zf}:{\mathbb R}^{n}\zfl{\mathbb
R}$ positive on $A$ and flat at every point of ${\mathbb R}^{n}-A$
and set $\zf={\tilde\zf}\zl_{0}(\zr/2)$.
\end{proof}

\begin{proposition}\label{prop-A1}
Consider a vector field $X$ on an open set $A$ of a manifold $M$.
Then there exists a bounded function $\zf:M\zfl{\mathbb R}$, which
is positive on $A$ and vanishes on $M-A$, such that the vector
field ${\hat X}$ on $M$ defined by ${\hat X}=\zf X$ on $A$ and
${\hat X}=0$ on $M-A$ is differentiable.
\end{proposition}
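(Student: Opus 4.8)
The plan is to reduce the statement about the vector field $X$ to the scalar statement already proved in Lemma \ref{lem-A2}. The key observation is that differentiability of ${\hat X}=\zf X$ on $M$ is a purely local and componentwise question: once $M$ is realized as a closed submanifold of some ${\mathbb R}^{n}$ (or, working in charts, once we fix local coordinates on $M$), the vector field $X$ is described on $A$ by its component functions $X^{1},\ldots,X^{m}$ with respect to a coordinate frame, and ${\hat X}=\zf X$ is differentiable precisely when each product $\zf X^{j}$ prolongs by zero to a differentiable function on $M$. Thus the problem is to find a single function $\zf$, positive on $A$, vanishing on $M-A$, and bounded, that simultaneously flattens all the $X^{j}$ against the boundary of $A$.

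First I would pass to coordinates, as in the proof of Lemma \ref{lem-A2}, via a closed imbedding $M\zco{\mathbb R}^{n}$ and a tubular neighborhood $\zp:E\zfl M$, so that it suffices to treat an open set $A\zco{\mathbb R}^{n}$; here $X$ is simply an ${\mathbb R}^{n}$-valued map $f=(X^{1},\ldots,X^{n}):A\zfl{\mathbb R}^{n}$. The point is that Lemma \ref{lem-A2} is a statement about a single scalar function $f:A\zfl{\mathbb R}$, but its proof produces a \emph{universal} $\zf$ depending only on the family of continuous positive functions $\zbv Df\zbv+1$ over all partial derivative operators $D$. I would therefore apply the construction of Lemma \ref{lem-A2} not to one component but to the full finite family $\{X^{j}\}_{j=1}^{n}$: enlarge the countable family $g_{0},g_{1},\ldots$ of continuous positive functions from $\{\zbv DX^{j}\zbv+1\}$ ranging over all $D$ \emph{and} all components $j$, then run Lemma \ref{lem-A1} against this enlarged family to obtain $\zr$, and set $\zf={\tilde\zf}\zl_{0}(\zr/2)$ exactly as there. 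By the same estimates, each ${\tilde f}_{j}=\zl_{0}(\zr/2)X^{j}$ is neatly bounded and $\zf X^{j}$ is differentiable for every $j$ simultaneously, which is what is needed for ${\hat X}$ to be smooth.

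Two points require attention. The first is \textbf{boundedness of $\zf$}: the statement explicitly demands a bounded $\zf$, which is immediate since ${\tilde\zf}$ may be taken with image in $[0,1]$ (compose a positive flat-at-$M-A$ function with $\zt$ as in the remark before Lemma \ref{lem-A2}) and $\zl_{0}(\zr/2)=e^{-2/\zr}$ is bounded by $1$ on $A$, so $\zf\zmei 1$ everywhere. The second, and the main obstacle, is to check that a \emph{single} $\zf$ works for all components at once rather than needing one $\zf_{j}$ per component. This is resolved precisely by the universality noted above: since $\zr$ is built to dominate the reciprocals of \emph{all} the $g_{\zlma}$ simultaneously on the sets $A-K_{r}$, the neat-boundedness estimate $e^{-1/\zr}\zbv D'X^{j}\zbv\zmei\zr g_{\zlma}\zmei 1$ near each boundary point holds uniformly in $j$, so the inductive differentiability argument of Lemma \ref{lem-A2} goes through verbatim for each product $\zf X^{j}$. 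Hence ${\hat X}=\zf X$ is differentiable, with the required $\zf$, completing the proof.
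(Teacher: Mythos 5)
Your proposal is correct, and its skeleton --- closed imbedding of $M$ into some ${\mathbb R}^{n}$, tubular neighborhood, reduction to an open set $A\subset{\mathbb R}^{n}$, and a componentwise treatment of $X=\sum_{j=1}^{n}f_{j}\partial/\partial x_{j}$ via the machinery of Lemma \ref{lem-A2} --- coincides with the paper's. Where you genuinely diverge is in how the $n$ components are combined into a \emph{single} function $\varphi$. The paper uses Lemma \ref{lem-A2} strictly as a black box: it applies the lemma once per component $f_{j}$, obtaining functions $\varphi_{1},\ldots,\varphi_{n}$, and then simply sets $\varphi=\varphi_{1}\cdots\varphi_{n}$; this works because each $\varphi_{k}$ is smooth on all of ${\mathbb R}^{n}$, so $\varphi f_{j}=\bigl(\prod_{k\neq j}\varphi_{k}\bigr)\cdot(\varphi_{j}f_{j})$ is a product of globally smooth functions, the second factor being exactly the differentiable extension the lemma provides. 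Boundedness is then restored a posteriori by replacing $\varphi$ with $\varphi/(\varphi+1)$, which is harmless since $1/(\varphi+1)$ is smooth and positive. You instead open up the proof of Lemma \ref{lem-A2} and exploit its universality: the construction there depends on $f$ only through the countable family $\{|Df|+1\}$, so running Lemma \ref{lem-A1} against the union of these families over all components $j$ produces one $\rho$, hence one $\varphi=\tilde\varphi\,\lambda_{0}(\rho/2)$ that makes every $\lambda_{0}(\rho/2)f_{j}$ neatly bounded simultaneously. Both routes are sound. The paper's product trick is shorter and leaves the scalar lemma untouched, at the price of the post hoc boundedness fix; your version requires reworking the lemma's proof for a finite (indeed, any countable) family of functions, but it yields $\varphi\leq 1$ for free and makes explicit that Lemma \ref{lem-A2} is really a simultaneous statement --- an observation the paper's formulation leaves implicit.
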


\begin{proof} Regard $M$ as a closed imbedded submanifold of some
${\mathbb R}^{n}$; let $\zp:E\zfl M$ be a tubular neighborhood of
$M$. Then there exists a vector field $X'$ on $\zp^{-1}(A)$ such
that $X'=X$ on $A$ and, by restriction of the function, it
suffices to show our result for $X'$ and $\zp^{-1}(A)$. That is to
say, we may suppose, without loss of generality, that $A$ is an
open set of ${\mathbb R}^{n}$.

In this case on $A$ one has $X=\zsu_{j=1}^{n}f_{j}\zpar/\zpar
x_{j}$. Applying Lemma \ref{lem-A2} to every function $f_j$ yields
a family of functions $\zf_{1},\ldots ,\zf_{n}$. Now it is enough
setting $\zf=\zf_{1}\zpu\zpu\zpu\zf_{n}$.

Finally, if $\zf$ is not bounded take $\zf/(\zf+1)$ instead of
$\zf$.
\end{proof}


\end{document}